\theoremstyle{plain}
  \declaretheorem[numberwithin=section]{theorem}
  \declaretheorem[numberlike=theorem]{corollary}
  \declaretheorem[numberlike=theorem]{proposition}
  \declaretheorem[numberlike=theorem]{lemma}
  \declaretheorem[numberlike=theorem]{conjecture}
  \declaretheorem[numberlike=theorem]{question}
\theoremstyle{definition}
  \declaretheorem[numberlike=theorem]{definition}
  \declaretheorem[numberlike=theorem]{example}
  \declaretheorem[numberlike=theorem]{remark}
\newcommand{\ps}[1]{[\![#1]\!]}
\newcommand{\ZZ}{\mathbb{Z}}
\newcommand{\QQ}{\mathbb{Q}}
\newcommand{\diag}[0]{\mathrm{Diag}}
\newcommand{\ct}{\operatorname{ct}}
\newcommand{\nequiv}{\mathrel{\not\equiv}}
\begin{document}

\title{On the representability of sequences\\ as constant terms}

\author[1]{\href{https://specfun.inria.fr/bostan/}{Alin Bostan}}
\author[2]{\href{https://arminstraub.com}{Armin Straub}}
\author[1,3]{\href{https://homepage.univie.ac.at/sergey.yurkevich/}{Sergey Yurkevich}}
\affil[1]{Inria, Univ. Paris-Saclay, France}
\affil[ ]{\emph{\texttt{\href{mailto:alin.bostan@inria.fr}{alin.bostan@inria.fr}}}}
\affil[2]{University of South Alabama, USA}
\affil[ ]{\emph{\texttt{\href{mailto:straub@southalabama.edu}{straub@southalabama.edu}}}}
\affil[3]{University of Vienna, Austria}
\affil[ ]{\emph{\texttt{\href{mailto:sergey.yurkevich@univie.ac.at}{sergey.yurkevich@univie.ac.at}}}}
\date{}                   
\setcounter{Maxaffil}{0}
\renewcommand\Affilfont{\itshape\small}

\maketitle

\begin{abstract}
A constant term sequence is a sequence of rational numbers whose $n$-th
term is the constant term of $P^n(\boldsymbol{x}) Q(\boldsymbol{x})$,
where $P(\boldsymbol{x})$ and $Q(\boldsymbol{x})$ are multivariate
Laurent polynomials.
While the generating functions of such sequences are invariably
diagonals of multivariate rational functions, and hence special period
functions, it is a famous open question, raised by Don Zagier, to
classify diagonals that are constant terms. In this paper, we provide
such a classification in the case of sequences satisfying linear
recurrences with constant coefficients.
We also consider the case of hypergeometric sequences and, for a simple
illustrative family of hypergeometric sequences, classify those that are
constant terms.
\end{abstract}

\noindent
\textbf{Keywords:} Integer sequences, C-finite sequences, hypergeometric
sequences, constant term sequences, P-finite sequences, Laurent
polynomials, Gauss congruences, diagonals of rational functions.

\section{Introduction}

Recognizing and interpreting integrality of sequences defined by
recursions is at the same time an extensively studied and a difficult
topic in number theory. Even in the case of \emph{P-finite} sequences
$A(n)$ (also called \emph{P-recursive}, or \emph{holonomic}), defined by
linear recurrences with polynomial coefficients
\[
p_r(n) A(n+r) = p_{r-1}(n) A(n+r-1) + \cdots + p_0(n) A(n), \quad p_i(n) \in \ZZ[n],
\]
neither a criterion nor even an algorithm is known for classifying/deciding integrality. An attempt for such a classification is the famous and widely open conjecture by Christol~\cite[Conjecture 4, p.~55]{christol-glob}. Roughly speaking, it states that a P-finite sequence $(A(n))_{n\geq0}$ with (at most) geometric growth is integral if and only if $(A(n))_{n\geq0}$ is the coefficient sequence of the diagonal of a rational function $R(\boldsymbol{x}) \in \ZZ(x_1,\dots,x_d) \cap \ZZ\ps{x_1,\dots,x_d}$ for some $d\geq 1$. Recall that the diagonal of a multivariate power series
\begin{equation} \label{eq:diag}
  R(\boldsymbol{x}) = \sum_{n_1, n_2, \dots, n_d \geq 0} c (n_1, n_2, \dots, n_d)
   x_1^{n_1}  x_2^{n_2}\cdots x_d^{n_d}
\end{equation}
is the univariate power series $\diag(R)$ whose coefficient sequence is given by $A (n) = c (n, n, \ldots, n)$. For a precise statement of Christol's conjecture see \cref{conj:christol} below.

Often integrality of sequences can be explained by the underlying combinatorial nature. For example, the Catalan numbers $C(n)$ satisfying
\[
(n+2) C(n+1) = 2(2n+1) C(n), \quad C(0)=1,
\]
are clearly integers because they count triangulations of convex polygons with $n+2$ vertices. On the other hand, for many other integral and P-finite sequences, combinatorial interpretations are not \textit{a priori} known; this is the case, for instance, for the Apéry numbers $A(n)$ (associated with the irrationality proof of $\zeta(3)$) defined by 
\begin{align*}
(n+1)^3 A(n+1) = (2n + 1)(17n^2 + 17n + 5) A(n) & - n^3 A(n-1), \\ & A(0) = 1, A(1) = 5.
\end{align*}
In both examples above, integrality can be seen from the explicit formulas
\[
C(n) = \binom{2n}{n} - \binom{2n}{n+1} \quad \text{and} \quad A(n) = \sum_{k=0}^n \binom{n}{k}^2\binom{n+k}{k}^2.
\]
Putting Christol's conjecture in practice gives a different justification for the integrality of these two examples. It namely holds that
\begin{align*}
\sum_{n \geq 0} C(n) t^n &= \diag \left(\frac{1-y}{1- x(y+1)^2} \right) 
\quad \text{and}\\
\sum_{n \geq 0} A(n) t^n &= \diag \left( \frac{1}{1-(xy+x+y)(zw+z+w)} \right),
\end{align*}
and the integrality of $C(n)$ and $A(n)$ follows from that of the coefficients in the Taylor expansions of the corresponding multivariate rational functions.

In the context of the current text, however, we would like to emphasize a slightly different viewpoint, which does not only justify integrality of the two examples, but also implies some interesting arithmetic properties.  Writing $\ct[P(\boldsymbol{x})]$ for the constant term of a Laurent polynomial $P(\boldsymbol{x}) \in \QQ[x_1^{\pm1},\dots,x_d^{\pm1}]$, one can prove that \cite[Rem.~1.4]{Straub14}
\begin{align*}
    C(n) & = \ct\left[ (x^{-1} + 2 + x)^n(1-x) \right] \quad \text{and}\\
    A(n) & = \ct\left[ \left(\frac{(x + y)(z + 1)(x + y + z)(y + x + 1)}{xyz}\right)^n \right].
\end{align*}

Similar identities as in the examples of the Catalan and Apéry numbers can be deduced for many other integral P-finite sequences. This motivates the following definition and the subsequent natural question.
\begin{definition}
A sequence $A(n)$ is a \emph{constant term} if it can be represented as
\begin{equation}
  A (n) = \ct [P (\boldsymbol{x})^n Q (\boldsymbol{x})], \label{eq:ct:pq}
\end{equation}
where $P, Q \in \mathbb{Q} [\boldsymbol{x}^{\pm 1}]$ are Laurent polynomials in
$\boldsymbol{x}= (x_1, \ldots, x_d)$. 
\end{definition}
Using the geometric series it is easy to see that generating functions of constant term sequences can be expressed as diagonals of rational functions. The converse is, however, not true in general. This leads to the following question which was raised by Zagier \cite[p.~769, Question~2]{zagier-de} and Gorodetsky \cite{gorodetsky-ct} in the case $Q = 1$ (see \cref{lem:minton:ct} below for an indication why this case is of
particular arithmetic significance). 
    \begin{question}	 \label{Q1}
    \textit{Which P-finite sequences are constant terms?}
    \end{question}

To our knowledge, \cref{Q1} is widely open. In fact, the initial motivation for the present text was the goal of answering the following very particular sub-question asked by the second author in~\cite[Question 5.1]{s-schemes}: 

    \begin{question}	 \label{Q2}
    \textit{Is the Fibonacci sequence $(F(n))_{n\geq0}$ a constant term sequence?}
    \end{question}
Recall that the Fibonacci sequence is the coefficient sequence in the Taylor expansion of the univariate rational function $x/(1-x-x^2)$, or equivalently the P-finite sequence $(F(n))_{n\geq0}$ defined by $F(n+2) = F(n+1) + F(n)$ and $F(0)=0, F(1)=1$.

Already in~\cite{s-schemes} the second author noted that a representation of the Fibonacci numbers as constant terms with $Q = 1$ is impossible since $(F(n))_{n\geq0}$ does not satisfy the so-called \emph{Gauss congruences} (see~\eqref{eq:gauss}). Exploiting the fact that for any prime $p$, the value $F(p) \pmod{p}$ depends on $p \pmod{5}$, we can show (see~\cref{ex:fibonacci}) that the answer to \cref{Q2} is negative. The reason for this is that, as we will prove, for any constant term sequence $A(n)$, the sequence $A(p) \pmod{p}$ must be constant for large enough primes~$p$. Note that this is not a sufficient criterion, since already the Lucas numbers $L(n)$ (defined by the same recursion as the Fibonacci numbers, but with different initial terms $L(0)=2, L(1)=1$, see~\eqref{eq:lucas:L}) do satisfy the Gauss congruences but {are not} constant terms (see \cref{ex:lucas}). 

In the present text, we are able to answer \cref{Q1} in the case of diagonals of rational functions
$F (x) \in \mathbb{Q} (x)$ in a single variable. Such sequences are precisely the (rational) \emph{C-finite sequences} (also known as \emph{C-recursive} sequences), and are characterized by the fact that they satisfy a linear recursion with constant rational coefficients. More explicitly, we define a sequence $A(n)$ of rational numbers to be C-finite if there exists a polynomial $P(x) \in \QQ[x]$ such that for every $n\geq0$ we have
\begin{equation}\label{def:cfinite0}
P(N) A(n) = 0,
\end{equation}
where $N$ denotes the shift operator $N^\ell(A(n)) \coloneqq A(n+\ell)$ for all $\ell \geq 0$. Equivalently, there exist integers $r>0$ and $n_0\geq0$, and complex numbers $c_0,\dots,c_{r-1}$ with $c_0 \neq 0$ such that
\begin{equation}\label{def:cfinite}
A(n+r) = c_{r-1} A(n+r-1) + \cdots + c_0 A(n) \quad \text{for all $n\geq n_0$}.
\end{equation}
We recall that associated to the recursion~\eqref{def:cfinite}, the \emph{characteristic roots} are usually defined as the roots of 
\[
\chi(\lambda) \coloneqq \lambda^r - c_{r-1} \lambda^{r-1} - \cdots - c_0.
\]
For our purpose, however, it is useful to define the characteristic roots of a C-finite sequence $A(n)$ as the roots of $P(x)$, where $P(x)$ is chosen with minimal degree such that \eqref{def:cfinite0} holds. Note that the only difference between considering roots of $\chi$ and $P$ is that $0$ can be a root of the latter. Equivalently, 0 is defined to be a characteristic root of $A(n)$ of multiplicity $m_0$ if the minimal $n_0$ in \eqref{def:cfinite} (chosen so that $r$ is minimal) equals $m_0$. With these definitions we obtain the following:

\begin{proposition}
  \label{thm:C:ct:1:pf}Let $A (n)$ be a C-finite sequence. $A (n)$ is a constant term if and only if it has a single characteristic root
  $\lambda$ and $\lambda \in \mathbb{Q}$.
\end{proposition}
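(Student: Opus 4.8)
The plan is to prove the two implications separately. The sufficiency direction (single rational root $\Rightarrow$ constant term) is a direct single-variable construction, while the necessity direction is the substantial part and rests on a Frobenius congruence.

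For sufficiency, suppose $A(n)$ has a single characteristic root $\lambda\in\QQ$. The minimal $P(x)\in\QQ[x]$ from~\eqref{def:cfinite0} then equals $(x-\lambda)^m$ up to a scalar, so solving the recurrence gives $A(n)=p(n)\lambda^n$ for all $n$, with $p\in\QQ[n]$ of degree $m-1$ (when $\lambda\neq0$), and $A$ finitely supported (when $\lambda=0$). I would realize such sequences in one variable $x$. Using the elementary identity $\binom{n}{k}=\ct[(1+x)^n x^{-k}]$ and expanding $p$ in the binomial basis, $p(n)=\sum_{k} a_k\binom{n}{k}$ with $a_k\in\QQ$, I obtain
\[
A(n)=\lambda^n\sum_k a_k\,\ct[(1+x)^n x^{-k}]=\ct\!\left[\bigl(\lambda+\lambda x\bigr)^n\,\textstyle\sum_k a_k x^{-k}\right],
\]
so $A(n)=\ct[P^nQ]$ with $P=\lambda(1+x)$ and $Q=\sum_k a_k x^{-k}\in\QQ[x^{\pm1}]$. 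The case $\lambda=0$ is even simpler: if $A(n)=0$ for $n\geq m$, then $P=x$ and $Q=\sum_{j<m}A(j)x^{-j}$ work, since $\ct[x^n x^{-j}]$ equals $1$ if $n=j$ and $0$ otherwise. This settles sufficiency, and shows one variable always suffices. Note that a single characteristic root is automatically rational: a minimal $P\in\QQ[x]$ with one distinct root is a power of a linear factor, and the ratio of its two leading coefficients is $-m\lambda\in\QQ$.

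For necessity, suppose $A(n)=\ct[P^nQ]$ is C-finite. The engine is the Frobenius congruence $P(\boldsymbol x)^p\equiv P(\boldsymbol x^p)\pmod p$. Since $P(\boldsymbol x^p)$ involves only monomials with exponents divisible by $p$, for every prime $p$ exceeding all exponents appearing in $Q$ only the $\boldsymbol 0$-monomial of $P(\boldsymbol x^p)$ contributes to the constant term of $P(\boldsymbol x^p)Q(\boldsymbol x)$, whence
\[
A(p)=\ct[P^pQ]\equiv\ct[P(\boldsymbol x^p)Q(\boldsymbol x)]=\ct[P]\,\ct[Q]\pmod p.
\]
Thus $A(p)\bmod p$ is constant for all large primes. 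Applying the same computation to $P^{mp}=(P^p)^m$ yields the refinement $A(mp)\equiv\ct[P^m]\cdot\ct[Q]\pmod p$ for each fixed $m$ and all large $p$.

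It remains to translate these congruences into the statement that $A$ has a single characteristic root. After clearing denominators so that the roots $\lambda_1,\dots,\lambda_s$ are algebraic integers forming a Galois-stable multiset (the minimal polynomial lies in $\QQ[x]$), write $A(n)=\sum_i P_i(n)\lambda_i^n$. For a prime $\mathfrak p$ above $p$ one has $\lambda_i^p\equiv\mathrm{Frob}_{\mathfrak p}(\lambda_i)\pmod{\mathfrak p}$ and $P_i(p)\equiv P_i(0)\pmod{\mathfrak p}$, so eventual constancy of $A(p)\bmod p$ forces, via Chebotarev, the function $\sigma\mapsto\sum_iP_i(0)\,\sigma(\lambda_i)$ to be constant on the Galois group; combined with the refined $A(mp)$ congruences this should pin the multiset $\{\lambda_i\}$ down to a single rational value. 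I expect this last step to be the main obstacle: the bare mod $p$ statement is genuinely insufficient, since the Lucas numbers satisfy $L(p)\equiv1\pmod p$ for all $p$ yet have two characteristic roots, so eliminating such trace-like multi-root configurations requires the finer information carried by the $A(mp)$ congruences (or congruences modulo $p^2$) rather than the value of $A(p)\bmod p$ alone.
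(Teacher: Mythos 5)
Your sufficiency direction is correct and essentially identical to the paper's: the paper uses $\ct[(x+\lambda)^n(\lambda/x)^r]=\binom{n}{r}\lambda^n$ (its equation \eqref{eq:ct:exp}) where you use $\binom{n}{k}=\ct[(1+x)^n x^{-k}]$, and both handle the root $0$ by finitely supported sequences. The necessity direction, however, has a genuine gap, and it is exactly where you say you expect trouble. Your congruences $A(p)\equiv\ct[P]\ct[Q]$ and $A(mp)\equiv\ct[P^m]\ct[Q]\pmod p$ are true but provably too weak, for two reasons. First, evaluating at multiples of $p$ annihilates the polynomial parts: since $P_i(mp)\equiv P_i(0)\pmod{\mathfrak p}$, your constraints only see the constant terms $P_i(0)$, so a sequence such as $n(2^n+3^n)$ (two characteristic roots, hence not a constant term by the proposition) satisfies every congruence you derive, with the role of $\ct[Q]$ played by $0$. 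Second, even when $P_i(0)\neq 0$, the Chebotarev argument only forces the data $(\lambda_i,P_i(0))$ to be Galois-stable with coefficients constant on orbits, i.e.\ trace-like behavior; the Lucas numbers pass this test, since $\sum_i\sigma(\lambda_i)^m=L(m)$ for both $\sigma$, and ruling out $L(m)=\ct[P^m]\ct[Q]$ is not accessible from your toolkit because $\ct[P^m]$ is an unknown sequence that in several variables is in general only P-finite, not C-finite (e.g.\ $\binom{2m}{m}=\ct[(x+2+x^{-1})^m]$), so no structural comparison with $L(m)$ is available without circularity.

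The missing idea in the paper is to use the congruence of \cref{lem:ct:pr:large} at \emph{shifted} arguments: for every fixed $n\geq 0$ and all large $p$ (depending on $n$), $A(p+n)\equiv c_0\,A(n)\pmod p$ with $c_0=\ct[P]$, which is the case $k=n$, $r=1$ of that lemma and crucially preserves the polynomial parts, since $p_j(p+n)\equiv p_j(n)\pmod p$ by Fermat. Because both sides are then independent of $p$ and agree modulo infinitely many primes, one gets the exact identity
\begin{equation*}
  \sum_{j=1}^{d}\lambda_j^{\,n+1}p_j(n)=c_0\Big[A_0(n)+\sum_{j=1}^{d}\lambda_j^{\,n}p_j(n)\Big]\quad\text{for all } n\geq 0,
\end{equation*}
and uniqueness of the exponential-polynomial representation of a C-finite sequence forces $c_0A_0(n)=0$ and $\lambda_j=c_0$ for every $j$, i.e.\ a single root. (Note this requires knowing beforehand that all $\lambda_j$ are rational so that $p_j\in\QQ[n]$ and Fermat applies; the paper obtains this from \cref{prop:charroots:rational}, whose proof is itself nontrivial: it uses that all shifts $A(n+k)=\ct[P^n(P^kQ)]$ are again constant terms, chooses a non-trace separable sequence $B(n)$ in the solution space, and derives a contradiction from \cref{cor:ct:pr:large} via \cref{lem:C:minton:sep}, which rests on Minton's \cref{thm:minton}. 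Your Chebotarev sketch does not replace this step either.) So the overall architecture you propose is close in spirit, but the decisive device --- congruences at $p+n$ for all shifts $n$, upgraded to an exact C-finite identity --- is absent, and without it the trace-like multi-root configurations you correctly worry about cannot be eliminated.
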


This proposition immediately answers \cref{Q2} but also shows that, for example, the sequence $A(n) = 2^n+1$ is not a constant term sequence either (in both of these cases, there are two different characteristic roots). Evidently, however, it is the sum of two constant terms: we see that the class of constant term sequences is not a ring. Therefore, to fix this issue, it is natural to consider the class of sequences given as $\QQ$-linear combinations of constant terms: 

        \begin{question}	 \label{Q3}
\textit{Which P-finite sequences are finite $\QQ$-linear combinations of constant terms?}
    \end{question}

Again in the case of C-finite sequences, we can answer this question completely with the main result of the present work:
\begin{theorem}
  \label{thm:C:ct} Let $A (n)$ be a C-finite sequence. Then $A (n)$ is an
  $r$-term $\mathbb{Q}$-linear combination of constant terms if and only if it
  has at most $r$ distinct characteristic roots, all of which are rational.
\end{theorem}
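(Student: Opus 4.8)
This direction is immediate from \cref{thm:C:ct:1:pf} together with the structure theory of C-finite sequences. If $A(n)$ has distinct characteristic roots $\mu_1,\dots,\mu_s$ with $s\le r$, then for all large $n$ one has $A(n)=\sum_{i=1}^{s}q_i(n)\mu_i^{\,n}$ for suitable polynomials $q_i$, and after accounting for the finitely many initial terms (i.e.\ the root $0$), each summand $A_i(n)\coloneqq q_i(n)\mu_i^{\,n}$ is itself a C-finite sequence whose only characteristic root is $\mu_i$. If every $\mu_i\in\QQ$, then \cref{thm:C:ct:1:pf} shows that each $A_i$ is a constant term, so $A=\sum_{i=1}^{s}A_i$ is an $s$-term, hence $r$-term, $\QQ$-linear combination of constant terms.

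\textbf{Necessity.} Suppose $A(n)=\sum_{j=1}^{r}\alpha_j B_j(n)$ with $\alpha_j\in\QQ$ and $B_j(n)=\ct[P_j^{\,n}Q_j]$. After clearing denominators we may assume $P_j,Q_j\in\ZZ[\boldsymbol{x}^{\pm1}]$ and work modulo all sufficiently large primes $p$. The arithmetic engine is the congruence alluded to before \cref{ex:fibonacci}: since $P_j^{\,p}\equiv P_j(\boldsymbol{x}^{p})\pmod p$, and for $p$ larger than the diameter of the supports only the zero-frequency term of $P_j(\boldsymbol{x}^p)$ can meet the support of $Q_j$, one obtains
\[
  B_j(p)=\ct[P_j^{\,p}Q_j]\equiv \ct[P_j(\boldsymbol{x}^{p})Q_j]=\ct[P_j]\,\ct[Q_j]\pmod p .
\]
Hence $A(p)\equiv\sum_{j}\alpha_j\,\ct[P_j]\,\ct[Q_j]\pmod p$ is \emph{eventually constant} in $p$. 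I would first use this to force rationality of the roots: if $A$ had a non-rational characteristic root, I would pass to the splitting field $K$ of its characteristic polynomial, write $A(n)=\sum_i q_i(n)\mu_i^{\,n}$ over $K$, and choose, via Chebotarev, primes $p$ realizing two different Frobenius classes in $\operatorname{Gal}(K/\QQ)$. Reducing the root expansion modulo a prime of $K$ above $p$ then makes $A(p)$ depend on the Frobenius class exactly as $F(p)\equiv\bigl(\tfrac p5\bigr)\pmod p$ does in \cref{ex:fibonacci}, contradicting eventual constancy; therefore every characteristic root lies in $\QQ$.

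The remaining step is the bound $t\le r$ on the number $t$ of distinct (now rational) roots, and here is where the real work lies. My plan is to pass to generating functions: $\sum_n B_j(n)t^n=\ct[Q_j/(1-tP_j)]=:G_j(t)$ is the diagonal of a rational function, holomorphic at $0$ with finitely many singularities, all algebraic over $\QQ(t)$, while $G(t)=\sum_j\alpha_j G_j(t)=\sum_n A(n)t^n\in\QQ(t)$ has a pole of order $m_i$ at each $t=1/\mu_i$. I would then argue that each fixed diagonal $G_j$ can contribute a genuine \emph{pole} (rather than a branch singularity that must cancel in a single-valued combination) to $G$ at at most one rational point: analyzing the constant-term integral $\ct[Q_j/(1-tP_j)]$ through its pinch (Landau) singularities, the residue-type contributions surviving in a rational combination should come from a single dominant critical value of $P_j$. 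Granting this, the at most $r$ rational poles of $G$ are supplied by the $r$ diagonals, giving $t\le r$, consistent with the $1\mapsto1$ case of \cref{thm:C:ct:1:pf}.

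\textbf{Main obstacle.} The counting bound is the crux. The chief difficulty is that the individual $B_j$ need not be C-finite, so \cref{thm:C:ct:1:pf} cannot be applied term-by-term; one must show that the ``non-C-finite parts'' of the $G_j$ cancel in $G$ \emph{without} allowing a single diagonal to feed poles at several rational points. Making the statement ``at most one rational pole per diagonal'' precise --- equivalently, upgrading the single scalar congruence above to an invariant that is additive over the $r$-term sum yet occupies at most one dimension per constant term while detecting each distinct rational root --- is the heart of the matter. A congruence-theoretic variant offers an alternative route to the same bound: reduce modulo a large prime $p$, where each $B_j\bmod p$ is the $p$-automatic coefficient sequence of an algebraic power series over $\mathbb{F}_p(t)$ (Furstenberg--Christol), and play this against the eventually periodic sequence $A\bmod p$, whose period encodes the $t$ distinct reductions $\bar\mu_i$; isolating from this the required $\mathbb{F}_p$-linear invariant of rank one per term would again yield $t\le r$.
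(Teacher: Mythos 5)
Your sufficiency argument is fine and agrees with the paper's. Both halves of your necessity argument, however, have genuine gaps. First, rationality of the characteristic roots does \emph{not} follow from the eventual constancy of $A(p) \bmod p$ plus Chebotarev: the Lucas numbers $L(n)=\varphi_+^n+\varphi_-^n$ of \eqref{eq:lucas:L} have irrational characteristic roots, yet $L(p)\equiv L(1)=1 \pmod{p}$ for \emph{every} prime $p$, since they are a trace sequence and satisfy the Gauss congruences (\cref{thm:minton}). This is precisely the case your Frobenius argument misses: when the coefficients of the root expansion are Galois-equivariant (the trace-sequence case), Frobenius merely permutes the summands $d_i\mu_i^p \equiv \sigma(d_i\mu_i)$, so $A(p) \bmod p$ is the same in every Frobenius class and no contradiction arises. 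To rule out irrational roots one needs strictly more than your single scalar congruence. The paper (\cref{prop:charroots:rational}) uses the full family $A(p^r n)\equiv A(pn) \pmod{p}$ for all $n$ and $r$ (\cref{cor:ct:pr:large}), applies it to \emph{every} sequence in the shift-generated solution space $V_A$ (each shift of $A$ is again a combination of constant terms), and invokes \cref{lem:C:minton:sep} to force every separable sequence in $V_A$ to be a trace sequence --- impossible when an irrational root $\lambda$ is present, because the corresponding space $V_\lambda$ has dimension $d\geq 2$ but contains only a one-dimensional family of trace sequences.

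Second, the counting bound, which you correctly identify as the crux, is not actually proved: both of your sketched routes (pinch-singularity analysis of the diagonals $G_j$, and $p$-automatic reductions) remain plans, and the key claim ``at most one rational pole per diagonal'' is exactly what would need proof. The paper's argument is far more elementary, and the one trick you are missing is an induction on $r$ driven by \cref{lem:ct:pr:large} with a shift. Writing $c_j=\ct[P_j]$, the congruence $A(p+n)\equiv \sum_{j}c_j\,\ct[P_j^n Q_j] \pmod{p}$, combined with Fermat's little theorem giving $A(p+n)\equiv\sum_i \lambda_i^{n+1}p_i(n) \pmod{p}$, holds modulo all large primes with both sides independent of $p$, hence is an exact identity. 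Subtracting $c_1 A(n)$ yields
\[
  B(n) \coloneqq \sum_{i}(\lambda_i-c_1)\,\lambda_i^{n}p_i(n) - c_1 A_0(n)
  \;=\; \sum_{j=2}^{r}(c_j-c_1)\,\ct[P_j(\boldsymbol{x})^n Q_j(\boldsymbol{x})],
\]
which is simultaneously C-finite (left-hand side) and an $(r-1)$-term combination of constant terms (right-hand side, the $j=1$ term having cancelled). Induction bounds $B(n)$ by $r-1$ distinct rational roots, and $A(n)$ has at most one more characteristic root than $B(n)$ (namely the root equal to $c_1$). This sidesteps entirely your (correct) worry that the individual $B_j$ need not be C-finite: the subtraction manufactures a new C-finite sequence with one fewer constant term, which is exactly the additive, ``rank one per term'' invariant you were searching for.
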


Having completed the classification of C-finite sequences 
that can be written as (sums of) constant terms,
there are two most natural directions for further work. On the one hand, it is reasonable to go from diagonals in one variable to diagonals in two variables. By the combination of results due to Pólya~\cite{Polya22} and Furstenberg \cite{furstenberg-diag} this is
known to be 
exactly the class of algebraic generating functions. One is then lead to the following question which we leave for future work:

            \begin{question}	 \label{Q4}
    \textit{Which sequences $A(n)$ with algebraic generating function are constant terms?}
    \end{question}

Another reasonable direction is to try to classify those hypergeometric sequences which are constant terms. Recall that a P-finite sequence $A(n)$ is called \emph{hypergeometric} if it satisfies a recursion of order one, i.e. $\alpha(n) A(n+1) = \beta(n)A(n)$ for some polynomials $\alpha(n), \beta(n) \in \QQ[n]$. In this sense, this class of sequences is arguably the simplest (and best understood) among P-finite ones. Still, Christol's conjecture remains open even in this very special case. In fact, it is still an open question whether the generating function of the sequence 
\[
A(n) = \frac{\left(\tfrac{1}{9} \right)_n \left(\tfrac{4}{9} \right)_n
  \left(\tfrac{5}{9} \right)_n}{n!^2 \left(\tfrac{1}{3} \right)_n}
\]
can be represented as the diagonal of a rational function. Here and later, $(x)_n \coloneqq x(x+1)\cdots(x+n-1)$ denotes the rising factorial. We can use the same methods as in the C-finite case to prove that $A(n)$ is not a constant term sequence (see \cref{lem:christol_hypergeom}). By classifying when the family \eqref{eq:hyp:2f1:ct} of hypergeometric sequences is a constant term, we are further able to conclude that not all hypergeometric diagonals are constant terms. The following question, however, remains open in general:

    \begin{question}	 \label{Q5}
    \textit{Which hypergeometric sequences are constant terms?}
    \end{question}

\medskip

The organization of the paper is as follows: In \cref{sec:trace}, we review properties of C-finite sequences that will be important for our purposes. In particular, we state \cref{thm:minton} which is due to Minton~\cite{minton-cong} and which is a crucial ingredient of our approach. In \cref{sec:cong}, we derive certain congruences that are satisfied by any constant term sequence; these are already enough to answer \cref{Q2}. 
By combining these congruences with Minton's Theorem, we prove in \cref{sec:C:ct} our main \cref{thm:C:ct}, thus answering \cref{Q1} and \cref{Q3} in the case of C-finite sequences. 
In the short \cref{sec:minton:ct} we prove a statement which is pleasingly similar to Minton's theorem and which allows to classify the constant terms with $Q=1$ among all constant terms. Finally, in \cref{sec:hyp}, we turn our attention to hypergeometric sequences and discuss \cref{Q5}.

\medskip
Throughout the article, $p$ denotes a prime number, $\mathbb{F}_p$ the finite field with $p$ elements 
and $\mathbb{Z}_p$ the ring of $p$-adic integers.

\section{Trace sequences}\label{sec:trace}

Let $A (n)$ be a C-finite sequence. Denote by $\lambda_1, \lambda_2, \ldots, \lambda_d \in \overline{\mathbb{Q}}$ the characteristic roots, and let $m_j$ be the multiplicity of the root $\lambda_j$. Recall that $\lambda_0 = 0$ is defined to be a characteristic root of $A(n)$ of multiplicity $m_0$ if the minimal $n_0$ in \eqref{def:cfinite} equals $m_0$. $A (n)$ can be written
as a linear combination
\begin{equation}
  A (n) = A_0(n) + \sum_{j = 1}^d \sum_{r = 0}^{m_j - 1} c_{j, r} n^r \lambda_j^{n}
  \label{eq:C:lincomb}
\end{equation}
for certain coefficients $c_{j, r} \in \overline{\mathbb{Q}}$ (more precisely, $c_{j, r} \in \mathbb{Q} (\lambda_1, \ldots, \lambda_d)$) and $A_0(n)$ a sequence of finite support $\{0,1,\ldots,m_0-1\}$. We refer to \cite{epsw-rec} or \cite[Chapter~4]{kauers-paule-ct} 
for introductions to
C-finite sequences. Note that allowing $0$ as a characteristic root is equivalent to not restricting the numerator of the rational generating function of $A(n)$ to have degree less than the degree of its denominator. In the following, we will refer to
\begin{equation}
  A^{\operatorname{sep}} (n) = A_0(0) + \sum_{j = 1}^d c_{j, 0} \lambda_j^n
  \label{eq:C:sep:lincomb}
\end{equation}
as the \emph{separable part} of $A (n)$. We note that, if $A (n) \in
\mathbb{Q}$, then $A^{\operatorname{sep}} (n) \in \mathbb{Q}$.

A sequence $A (n)$ is said to be a \emph{trace sequence} if it is a
$\mathbb{Q}$-linear combination of traces $\operatorname{Tr} (\theta^n) = \theta_1^n +
\cdots + \theta_r^n$ of algebraic numbers $\theta$ with Galois conjugates
$\theta_1 = \theta, \theta_2, \ldots, \theta_r$ (with the understanding that $\operatorname{Tr}(0^n)$ is $1$ for $n=0$ and $0$ otherwise). Equivalently, a trace
sequence is a C-finite sequence for which the multiplicity of each characteristic root is $m_j = 1$ and for which $c_{i, 0} = c_{j, 0}$ in \eqref{eq:C:lincomb} whenever $\lambda_i$ and $\lambda_j$ have the same
minimal polynomial. We further note as in \cite{bhs-gauss} that the
condition to be a trace sequence is equivalent to the property that the
generating function $F (x)$ is $F (0)$ plus a $\mathbb{Q}$-linear combination
of functions of the form $x u' (x) / u (x)$, where $u \in \mathbb{Q} [x]$ is
irreducible and $u (0) = 1$.

\begin{example} \label{ex:fib}
  \label{eg:F:L:trace}For the Fibonacci numbers $F (n)$, the representation
  \eqref{eq:C:lincomb} takes the form
  \begin{equation} \label{eq:fib}
    F (n) = \frac{\varphi_+^n - \varphi_-^n}{\sqrt{5}}, \quad \varphi_{\pm} =
     \frac{1 \pm \sqrt{5}}{2} .
  \end{equation}
  Because the coefficients of $\varphi_+^n$ and $\varphi_-^n$ differ in sign,
  the Fibonacci numbers $F (n)$ are not a trace sequence. On the other hand,
  the Lucas numbers
  \begin{equation}
    L (n) = \varphi_+^n + \varphi_-^n = \operatorname{tr} [M^n], \quad M =
    \begin{bmatrix}
      0 & 1\\
      1 & 1
    \end{bmatrix}, \label{eq:lucas:L}
  \end{equation}
  which satisfy the same recurrence as the Fibonacci numbers, are a trace
  sequence. In particular, it follows from \cref{thm:minton} that the
  Lucas numbers $L (n)$ satisfy the Gauss congruences \eqref{eq:gauss}.
\end{example}

Minton~\cite{minton-cong} classified those C-finite sequences that satisfy
the Gauss congruences \eqref{eq:gauss} (see \cite{bhs-gauss} for another
proof of Minton's result). 

\begin{theorem}[Minton, 2014] \label{thm:minton}
Let $A (n)$ be C-finite. Then the following are
  equivalent:
  \begin{enumerate}
    \item For all large enough primes $p$ and for all $r\geq 1$, $A (n)$ satisfies the Gauss
    congruences
    \begin{equation}
      A (p^r n) \equiv A (p^{r - 1} n) \pmod{p^r} .
      \label{eq:gauss}
    \end{equation}
    \item For all large enough primes $p$, $A (n)$ satisfies the congruences
    \begin{equation}
      A (p n) \equiv A (n) \pmod{p} . \label{eq:gauss:r1}
    \end{equation}
    \item $A (n)$ is a trace sequence.
  \end{enumerate}
\end{theorem}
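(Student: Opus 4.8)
The plan is to prove the three statements equivalent by establishing the cycle $(1)\Rightarrow(2)\Rightarrow(3)\Rightarrow(1)$. The implication $(1)\Rightarrow(2)$ is immediate, being the special case $r=1$ of \eqref{eq:gauss}. The real work lies in the two remaining implications, and I expect $(2)\Rightarrow(3)$ to be the crux.

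For $(3)\Rightarrow(1)$ I would reduce to a single trace $\operatorname{Tr}(\theta^n)=\theta_1^n+\cdots+\theta_s^n$, since $A(n)$ is a finite $\mathbb{Q}$-linear combination of such sequences and the congruences \eqref{eq:gauss} are preserved under $\mathbb{Q}$-linear combinations as soon as $p$ is large enough to clear the finitely many denominators of the coefficients. For a single $\theta$ with minimal polynomial $u\in\mathbb{Q}[t]$, I would use the generating-function identity
\[
\exp\!\Big(\sum_{n\ge 1}\operatorname{Tr}(\theta^n)\,\frac{x^n}{n}\Big)=\prod_{i=1}^{s}\frac{1}{1-\theta_i x}=\frac{1}{u^{*}(x)},
\]
where $u^{*}(x)=\prod_i(1-\theta_i x)\in\mathbb{Q}[x]$ is the reciprocal polynomial, with constant term $1$. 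For every prime $p$ not dividing the denominators of the coefficients of $u^{*}$ this series lies in $\ZZ_p\ps{x}$, being the inverse of a unit of $\ZZ_p\ps{x}$. The classical necklace criterion then states that, for $p$-adic integers $a_n$, one has $\exp(\sum a_n x^n/n)\in\ZZ_p\ps{x}$ if and only if $a_{p^r m}\equiv a_{p^{r-1}m}\pmod{p^r}$ for all $r\ge1$ and all $m$; applied to $a_n=\operatorname{Tr}(\theta^n)$ this yields \eqref{eq:gauss}. Equivalently, one may invoke the well-known congruence $\operatorname{tr}(M^{p^r n})\equiv\operatorname{tr}(M^{p^{r-1}n})\pmod{p^r}$ for the companion matrix $M$ of $u$.

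For $(2)\Rightarrow(3)$ I would argue through Frobenius and reduction modulo primes. Let $K=\mathbb{Q}(\lambda_1,\dots,\lambda_d)$ be the splitting field with Galois group $G$, and write $A(n)$ as in \eqref{eq:C:lincomb}. Fix a large prime $p$, unramified in $K$ and clearing all relevant denominators, and a prime $\mathfrak{p}\mid p$ of $K$ with Frobenius $\sigma=\operatorname{Frob}_{\mathfrak p}$, so that $\lambda_j^{p}\equiv\sigma(\lambda_j)\pmod{\mathfrak p}$ for every $j$. Since $(pn)^r\equiv 0\pmod p$ for $r\ge 1$, reducing $A(pn)$ modulo $\mathfrak{p}$ annihilates every higher-multiplicity term, giving
\[
A(pn)\equiv\sum_{j} c_{j,0}\,\sigma(\lambda_j)^n\pmod{\mathfrak p},
\]
whereas $A(n)=\sum_j\sum_r c_{j,r}n^r\lambda_j^n$ still carries its polynomial factors. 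Because $\sigma$ permutes the roots, condition $(2)$ becomes, for all $n$, a congruence modulo $\mathfrak p$ between two combinations of the sequences $n\mapsto n^r\lambda_j^n$.

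The hard part will be to extract honest equalities of the fixed algebraic numbers $c_{j,r}$ from these congruences. Here I would invoke linear independence modulo $\mathfrak p$: for $p$ larger than every multiplicity, the reductions $\overline{\lambda_j}$ are distinct and the sequences $n\mapsto n^r\overline{\lambda_j}^{\,n}$ (with $r<p$) are $\overline{\mathbb{F}_p}$-linearly independent, so matching coefficients forces $c_{j,r}\equiv 0\pmod{\mathfrak p}$ for all $r\ge 1$ and $c_{j,0}\equiv c_{\sigma^{-1}(j),0}\pmod{\mathfrak p}$. As each $c_{j,r}$ is a fixed element of $K$, a congruence holding modulo infinitely many primes $\mathfrak p$ forces an equality: thus all multiplicities equal $1$, and letting $\sigma$ range over $G$ (each element arising as $\operatorname{Frob}_{\mathfrak p}$ for infinitely many $\mathfrak p$ by Chebotarev) we obtain $c_{j,0}=c_{\sigma^{-1}(j),0}$ for every $\sigma\in G$. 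Hence the coefficient of $\lambda_j^n$ is constant on each Galois orbit; grouping the orbits rewrites $A(n)$ as $\sum_O c_O\operatorname{Tr}_O(\lambda^n)$, and comparing with the rationality of $A(n)$, together with the $\mathbb{Q}$-linear independence of the sequences $\operatorname{Tr}_O$, shows each $c_O\in\mathbb{Q}$, i.e. $A(n)$ is a trace sequence in the sense of \eqref{eq:C:sep:lincomb}. The finite-support part $A_0$ affects only boundedly many indices and is absorbed through the $\operatorname{Tr}(0^n)$ convention.
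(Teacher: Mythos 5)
First, a point of reference: the paper does not prove \cref{thm:minton} at all --- it is quoted as Minton's theorem, citing \cite{minton-cong} for the original proof and \cite{bhs-gauss} for an alternative one --- so your proposal can only be compared against those sources and against the techniques the paper uses elsewhere. Judged that way, your cycle $(1)\Rightarrow(2)\Rightarrow(3)\Rightarrow(1)$ is sound and close in spirit to the known proofs. Your $(3)\Rightarrow(1)$ via the necklace criterion for $\exp\big(\sum_{n\geq1} a_n x^n/n\big)$ applied to $1/u^{*}(x)$ is essentially the mechanism of \cite{bhs-gauss}, and it dovetails with the paper's remark that trace sequences are exactly those whose generating function is $F(0)$ plus a $\mathbb{Q}$-linear combination of terms $x u'(x)/u(x)$ with $u$ irreducible and $u(0)=1$; the companion-matrix congruence $\operatorname{tr}(M^{p^r n})\equiv\operatorname{tr}(M^{p^{r-1}n})\pmod{p^r}$ you offer as an alternative is likewise classical and valid for $M$ with entries in $\mathbb{Z}_p$. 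Your $(2)\Rightarrow(3)$ --- reduce modulo $\mathfrak{p}$, use $\lambda_j^p\equiv\sigma(\lambda_j)\pmod{\mathfrak p}$, kill the $r\geq1$ terms since $(pn)^r\equiv0\pmod p$, invoke linear independence of the sequences $n\mapsto n^r\overline{\lambda_j}^{\,n}$ (with $r<p$ and distinct nonzero reductions, both automatic for large $p$), and upgrade congruences modulo infinitely many $\mathfrak p$ to equalities via Chebotarev --- is precisely the Frobenius argument the paper itself deploys in its proof of \cref{lem:C:minton:sep}, run in the reverse direction and with multiplicities present. All the hypotheses you need there (denominators cleared, separability of the reduced characteristic polynomial, degrees below $p$) you either state or are routine.

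The one place you genuinely gloss is the finite-support part $A_0$. Under the paper's definition, a trace sequence admits $0$ as a characteristic root only with multiplicity one, since $\operatorname{Tr}(0^n)=\delta_{n,0}$; so ``absorbed through the $\operatorname{Tr}(0^n)$ convention'' is not automatic, and condition (b) must actually be used to kill $A_0$ on $\{1,\dots,m_0-1\}$. The repair fits inside your machinery: once your matching (valid for $n\geq m_0$) yields $c_{j,r}=0$ for $r\geq1$ and $c_{j,0}$ constant on Galois orbits, take $1\leq n<m_0$; then $pn\geq p>m_0$, so $A(pn)\equiv\sum_j c_{j,0}\,\sigma(\lambda_j)^n\equiv\sum_j c_{j,0}\,\lambda_j^n\pmod{\mathfrak p}$ (as $\sigma$ permutes each orbit and $c_{j,0}$ is orbit-constant), while $A(n)=A_0(n)+\sum_j c_{j,0}\lambda_j^n$; hence $A_0(n)\equiv0$ modulo infinitely many primes, forcing $A_0(n)=0$ for all $n\geq1$. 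The example $A(n)=\delta_{n,1}$, which fails (b) at $n=1$, shows this step is not vacuous. Finally, for rationality of the orbit coefficients your linear-independence argument does work, but a cleaner route is to apply each $\tau$ in the Galois group to the (unique) exponential-polynomial representation of the rational sequence $A(n)$, which gives $\tau(c_{j,0})=c_{\tau(j),0}=c_{j,0}$ directly. With these details added, your proof is complete and is a legitimate self-contained substitute for the citation.
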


We conclude from Minton's \cref{thm:minton} the following result, which we employ in the proof of our main result (\cref{thm:C:ct}). To see the importance of \cref{lem:C:minton:sep}, we note that, as we will show later (in \cref{cor:ct:pr:large}), the sequences $A(n)$ which are linear combinations of constant terms satisfy the congruences $A (p^r n) \equiv A (p n) \pmod{p}$ for all $r\geq1$ and large enough primes~$p$. 

\begin{lemma}
  \label{lem:C:minton:sep}Let $A (n)$ be C-finite. If $A (n)$ satisfies the
  congruences
  \begin{equation}
    A (p^r n) \equiv A (p n) \pmod{p} \label{eq:A:pr:p}
  \end{equation}
  for all $r \geq 1$ and for all large enough primes $p$, then the separable
  part $A^{\operatorname{sep}} (n)$ is a trace sequence.
\end{lemma}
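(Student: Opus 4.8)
The plan is to argue prime by prime: reduce the hypothesis \eqref{eq:A:pr:p} to a statement about the separable coefficients $c_{j,0}$ of \eqref{eq:C:lincomb}, and then upgrade the resulting congruences to genuine identities among these coefficients by letting the prime vary. Throughout I would fix the number field $K=\mathbb{Q}(\lambda_1,\dots,\lambda_d)$, pass to its Galois closure, and write $G=\mathrm{Gal}(K/\mathbb{Q})$. I restrict to all sufficiently large primes $p$: those unramified in $K$, for which every nonzero $\lambda_j$ and every $c_{j,0}$ is $\mathfrak p$-integral for any prime $\mathfrak p\mid p$, and for which the reductions $\bar\lambda_1,\dots,\bar\lambda_d$ stay distinct and nonzero (possible as the $\lambda_j$ are finitely many distinct nonzero algebraic numbers).

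\emph{First reduction.} Fix such a $p$, a prime $\mathfrak p\mid p$, and its Frobenius $\sigma=\mathrm{Frob}_{\mathfrak p}\in G$. For $n\geq 1$ and $r\geq 1$ the argument $p^rn$ lies beyond the support of $A_0$, and in \eqref{eq:C:lincomb} every summand with $s\geq 1$ carries a factor $(p^rn)^s$ divisible by $p$; hence
\[
A(p^r n)\equiv \sum_{j=1}^d c_{j,0}\,\lambda_j^{\,p^r n}\pmod{\mathfrak p}.
\]
Iterating the Frobenius congruence $\lambda_j^{p}\equiv\sigma(\lambda_j)\pmod{\mathfrak p}$ gives $\lambda_j^{\,p^r n}\equiv(\sigma^r\lambda_j)^n\pmod{\mathfrak p}$, and reindexing by the permutation $\sigma^r$ of the roots yields $A(p^r n)\equiv \sum_{j} c_{\sigma^{-r}\lambda_j,0}\,\lambda_j^{\,n}\pmod{\mathfrak p}$.

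\emph{Extracting coefficients.} Since \eqref{eq:A:pr:p} compares each $r\geq 1$ with the case $r=1$, subtracting the two reductions gives $\sum_{j}\bigl(c_{\sigma^{-r}\lambda_j,0}-c_{\sigma^{-1}\lambda_j,0}\bigr)\lambda_j^{\,n}\equiv 0\pmod{\mathfrak p}$ for all $n\geq 1$. As the $\bar\lambda_j$ are distinct and nonzero, the associated Vandermonde-type matrix is invertible over the residue field, so each coefficient vanishes modulo $\mathfrak p$: $c_{\sigma^{-r}\lambda_j,0}\equiv c_{\sigma^{-1}\lambda_j,0}\pmod{\mathfrak p}$. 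Now I would fix $g\in G$ and $r\geq 1$ and invoke the Chebotarev density theorem to produce infinitely many primes $p$ (with a suitable $\mathfrak p\mid p$) realizing $\mathrm{Frob}_{\mathfrak p}=g$. For each such $\mathfrak p$ the fixed element $c_{g^{-r}\lambda_j,0}-c_{g^{-1}\lambda_j,0}\in K$ is divisible by $\mathfrak p$; since a nonzero element of $K$ has positive valuation at only finitely many primes, this difference must be $0$. Comparing $r$ with $r+1$ and reindexing then gives $c_{g^{-1}\mu,0}=c_{\mu,0}$ for every root $\mu$ and every $g\in G$, i.e.\ $\lambda\mapsto c_{\lambda,0}$ is $G$-invariant, hence constant on each Galois orbit. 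Combined with the equivariance $g(c_{\lambda,0})=c_{g\lambda,0}$ (forced by the uniqueness of \eqref{eq:C:lincomb} and $A(n)\in\mathbb{Q}$), each orbit-coefficient is in fact rational, so grouping \eqref{eq:C:sep:lincomb} by Galois orbits exhibits $A^{\operatorname{sep}}$ as a $\mathbb{Q}$-linear combination of traces (the constant $A_0(0)$ being $A_0(0)\,\operatorname{Tr}(1^n)$). This is exactly the trace-sequence characterization in \cref{thm:minton}.

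The main obstacle I anticipate is the passage from the mod-$\mathfrak p$ congruences to exact coefficient identities: the hypothesis only ever compares $\sigma^r$-twisted with $\sigma$-twisted coefficients at a single prime, so one must carefully vary both the exponent $r$ and the Frobenius $g$ (via Chebotarev) to recover full $G$-invariance, and then use the finiteness of the support of a fixed nonzero algebraic number to clear the congruences. A secondary technical point, easy but worth isolating cleanly, is the uniform choice of ``large enough'' primes guaranteeing integrality, unramifiedness, and distinctness of the reduced roots needed for the Vandermonde step.
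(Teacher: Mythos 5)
Your argument is correct, but it takes a genuinely different route from the paper. The paper never touches Chebotarev: it splits $A(n)=A^{\operatorname{sep}}(n)+n\tilde A(n)$ (both rational, $p$-integral for large $p$), transfers the hypothesis \eqref{eq:A:pr:p} to $A^{\operatorname{sep}}$, reduces mod $p$, and observes that the Frobenius $z\mapsto z^p$ of $\overline{\mathbb{F}}_p$ permutes the (distinct, for large $p$) reduced roots; taking $r=m$ equal to the order of this permutation yields $A^{\operatorname{sep}}(p^m n)\equiv A^{\operatorname{sep}}(n)\pmod p$, which combined with \eqref{eq:A:pr:p} gives $A^{\operatorname{sep}}(pn)\equiv A^{\operatorname{sep}}(n)\pmod p$ — precisely condition (b) of Minton's \cref{thm:minton}, which is then invoked as a black box to conclude. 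You instead re-prove the relevant direction of Minton's theorem in the separable case: lifting the Frobenius to characteristic zero, extracting the coefficient congruences $c_{\sigma^{-r}\lambda_j,0}\equiv c_{\sigma^{-1}\lambda_j,0}\pmod{\mathfrak p}$ via the Vandermonde step, and then using Chebotarev plus finiteness of the support of a nonzero algebraic number to upgrade these to exact identities, obtaining $G$-invariance and hence rationality and constancy of the $c_{\lambda,0}$ on Galois orbits — which verifies the \emph{definition} of a trace sequence directly (your closing citation of \cref{thm:minton} is therefore slightly off: what you match is the equivalent description of trace sequences given in \cref{sec:trace}, not Minton's congruence criterion). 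What each approach buys: the paper's proof is shorter and softer, delegating all Galois-theoretic arithmetic to Minton's theorem, whose proof uses machinery of exactly the kind you deploy; yours is self-contained modulo Chebotarev and makes the orbit-constancy of the separable coefficients explicit, at the cost of heavier apparatus. Two trivially repairable blemishes: to kill the $s\geq1$ terms of \eqref{eq:C:lincomb} modulo $\mathfrak p$ you need \emph{all} coefficients $c_{j,s}$ (not just the $c_{j,0}$) to be $\mathfrak p$-integral, which your finitely-many-exclusions setup delivers but your list omits; and the reindexing by $\sigma^r$ tacitly uses that the set of characteristic roots is Galois-stable, which holds because the minimal annihilating polynomial lies in $\mathbb{Q}[x]$ and deserves a sentence.
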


\begin{proof}
  It follows from comparing \eqref{eq:C:lincomb} with
  \eqref{eq:C:sep:lincomb} that for $n$ large enough
  \begin{equation*}
    A (n) = A^{\operatorname{sep}} (n) + n \tilde{A} (n), 
  \end{equation*}
  where $A^{\operatorname{sep}} (n)$ and $\tilde{A} (n)$ are rational and satisfy the
  minimal recurrence for $A (n)$. In particular, each of these sequences is in
  $\mathbb{Z}_p$ for large enough $p$, since denominators can only arise from the coefficients of the recurrence and the initial conditions. It follows that
  \begin{equation*}
    A^{\operatorname{sep}} (p n) \equiv A (p n) \pmod{p}
  \end{equation*}
  for all large enough $p$. Consequently, the congruences \eqref{eq:A:pr:p}
  are also satisfied by the C-finite sequence $A^{\operatorname{sep}} (n)$. That is,
  for all $r \geq 1$ and large enough $p$
  \begin{equation}
    A^{\operatorname{sep}} (p^r n) \equiv A^{\operatorname{sep}} (p n) \pmod{p}.
    \label{eq:A:sep:pr:p}
  \end{equation}
  On the other hand, let us consider the C-finite
  sequence $A^{\operatorname{sep}} (n)$ in $\mathbb{F}_p$. To avoid confusion, we denote this reduced sequence by $a_p^{\operatorname{sep}} (n)$. Since the
  characteristic polynomial of $A^{\operatorname{sep}} (n)$ over $\mathbb{Q}$ is
  separable, it is also separable for all large enough primes $p$ (this can be
  seen by looking at the discriminant which, if nonzero over $\mathbb{Q}$, can
  only vanish modulo finitely many primes). Consequently, we have a version of
  \eqref{eq:C:sep:lincomb} with coefficients and roots in
  $\overline{\mathbb{F}}_p$. Namely,
  \begin{equation*}
    a_p^{\operatorname{sep}} (n) = \sum_{j = 1}^d d_j \mu_j^n, \quad d_j, \mu_j \in
     \overline{\mathbb{F}}_p .
  \end{equation*}
  Denoting with $\varphi_p : \overline{\mathbb{F}}_p \rightarrow
  \overline{\mathbb{F}}_p$ the Frobenius automorphism defined by $\varphi_p (z) =
  z^p$, we therefore have
  \begin{equation*}
    a_p^{\operatorname{sep}} (p^s n) = \sum_{j = 1}^d d_j \mu_j^{p^s n} = \sum_{j =
     1}^d d_j (\varphi_p^s (\mu_j))^n
  \end{equation*}
  for each $s \in \mathbb{Z}_{> 0}$. Note that $\varphi_p$ acts as a
  permutation on the roots $\mu_j$. Writing $m$ for the order of this
  permutation, we have $\varphi_p^m (\mu_j) = \mu_j$ and thus
  \begin{equation*}
    a_p^{\operatorname{sep}} (p^m n) = a_p^{\operatorname{sep}} (n) .
  \end{equation*}
  Consequently, the corresponding sequence $A^{\operatorname{sep}} (n)$ satisfies
  \begin{equation*}
    A^{\operatorname{sep}} (p^m n) \equiv A^{\operatorname{sep}} (n) \pmod{p} .
  \end{equation*}
  Combined with the congruences \eqref{eq:A:sep:pr:p}, this implies that
  \begin{equation*}
    A^{\operatorname{sep}} (p n) \equiv A^{\operatorname{sep}} (n) \pmod{p}
  \end{equation*}
  for all large enough primes~$p$. \cref{thm:minton} therefore implies that
  $A^{\operatorname{sep}} (n)$ is a trace sequence.
\end{proof}

\section{Congruences for constant terms}\label{sec:cong}

In this section we will show that if $A(n)$ is a constant term sequence then it must satisfy certain congruences for large enough primes $p$. As a consequence, this allows us to conclude that the Fibonacci numbers are not a constant term sequence, thus answering \cref{Q2} from the introduction. 

For a Laurent polynomial $P \in \mathbb{Q} [\boldsymbol{x}^{\pm 1}]$, let $\deg
(P)$ denote the maximal degree with which any variable or its inverse appears
in $P$.

\begin{lemma}
  \label{lem:ct:pr:large}Let $A (n) = \operatorname{ct} [P (\boldsymbol{x})^n Q
  (\boldsymbol{x})]$ with $P, Q \in \mathbb{Z}_p [\boldsymbol{x}^{\pm 1}]$. Then
  \begin{equation*}
    A (p^r n + k) \equiv A (k) \operatorname{ct} [P (\boldsymbol{x})^{p^{r - 1} n}]
     \pmod{p^r}
  \end{equation*}
  for all integers $n, k \geq 0$ and $r \geq 1$, provided that $p >
  \deg (P^k Q)$.
\end{lemma}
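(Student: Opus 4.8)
The plan is to establish the congruence by induction on $r$, with the heart of the matter being the case $r=1$, which rests on the classical Frobenius-type congruence for constant terms of $p$-th powers.

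First I would treat the base case $r=1$. The goal here is to show
\[
A(pn+k) \equiv A(k)\,\ct[P(\boldsymbol{x})^n] \pmod{p},
\]
i.e. $\ct[P^{pn+k}Q] \equiv \ct[P^kQ]\,\ct[P^n] \pmod p$. The key algebraic input is that, for $P \in \ZZ_p[\boldsymbol{x}^{\pm1}]$, one has $P(\boldsymbol{x})^p \equiv P(\boldsymbol{x}^p) \pmod{p}$, where $P(\boldsymbol{x}^p)$ denotes raising every variable to its $p$-th power; this is the multivariate Frobenius/freshman's-dream identity, valid because the multinomial coefficients other than the pure $p$-th powers are divisible by $p$ and $\ZZ_p$ has characteristic-$p$ residue field. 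Hence $P^{pn} \equiv P(\boldsymbol{x}^p)^n \pmod p$, and so $\ct[P^{pn+k}Q] \equiv \ct[P(\boldsymbol{x}^p)^n\,P^kQ] \pmod p$. Now I would exploit the crucial degree hypothesis $p > \deg(P^kQ)$: in the product $P(\boldsymbol{x}^p)^n \cdot P^kQ$, the first factor involves only monomials whose exponents are multiples of $p$, while the second factor $P^kQ$ has all exponents strictly between $-p$ and $p$. Therefore the only way to produce the zero monomial (the constant term) is to pair a monomial of $P(\boldsymbol{x}^p)^n$ with a monomial of $P^kQ$ that \emph{independently} contribute $0$ in each variable. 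This decoupling is exactly where the degree bound is used, and it lets the constant term of the product factor as $\ct[P(\boldsymbol{x}^p)^n]\cdot\ct[P^kQ]$; finally $\ct[P(\boldsymbol{x}^p)^n]=\ct[P(\boldsymbol{x})^n]$ since relabeling $x_i \mapsto x_i^p$ does not change the constant term. Combined, this yields the $r=1$ statement.

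Next I would run the induction. Assuming the claim for $r-1$, I would like to replace $P$ by an auxiliary Laurent polynomial whose $(r-1)$-th level constant terms reproduce the $P(\boldsymbol{x}^{p^{r-1}n})$ factor. Concretely, the identity $P^p \equiv P(\boldsymbol{x}^p) \pmod p$ can be bootstrapped to $P^{p^r} \equiv P(\boldsymbol{x}^{p^r}) \pmod{p^r}$ by iterating and tracking the $p$-adic valuation of the error terms (each application of Frobenius lifts the modulus by one power of $p$, a standard lifting-the-exponent style argument). Using this, $A(p^rn+k) = \ct[P^{p^rn+k}Q] \equiv \ct[P(\boldsymbol{x}^{p^r})^n\,P^kQ] \pmod{p^r}$, and the same degree-driven decoupling as above (now with the clean separation between exponents that are multiples of $p^r$ and the bounded exponents of $P^kQ$) factors this as $\ct[P(\boldsymbol{x}^{p^r})^n]\cdot\ct[P^kQ] = A(k)\,\ct[P(\boldsymbol{x})^{p^r n\!/\!p}]$—more carefully, one matches this against $A(k)\,\ct[P^{p^{r-1}n}]$ by observing $\ct[P(\boldsymbol{x}^{p^r})^n] = \ct[P(\boldsymbol{x}^p)^{p^{r-1}n}] \equiv \ct[P^{p^{r-1}n}] \pmod{p^r}$ via the lifted Frobenius identity again.

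The main obstacle, and the step deserving the most care, is the \emph{decoupling of the constant term across the two factors}, which is the sole place the hypothesis $p>\deg(P^kQ)$ enters. One must verify rigorously that no monomial of $P^kQ$ can "borrow" across variables from the coarse lattice $p^r\ZZ^d$ of exponents in $P(\boldsymbol{x}^{p^r})^n$ to land on the origin: since each exponent of $P^kQ$ lies in $(-p^r,p^r)$ componentwise (guaranteed by the degree bound, as $\deg(P^kQ)<p\le p^r$), any cancellation forcing a given variable's total exponent to vanish requires the $P^kQ$-contribution in that variable to itself be $0$. I would isolate this as the technical lemma and prove it by a direct componentwise argument on exponent vectors; everything else is bookkeeping around the Frobenius congruence and its $p$-adic lift.
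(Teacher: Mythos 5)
Your base case $r=1$ is correct, and its key step --- the degree-driven decoupling $\ct[P(\boldsymbol{x}^p)^n\,P^kQ]=\ct[P(\boldsymbol{x}^p)^n]\cdot\ct[P^kQ]$ when $p>\deg(P^kQ)$ --- is exactly the paper's argument, phrased there via the section operator $\Lambda_p$ (the condition $p>\deg(P^kQ)$ forces $\Lambda_p[P^kQ]=\ct[P^kQ]=A(k)$). The genuine gap is in your induction step: the congruence you propose to bootstrap to, $P(\boldsymbol{x})^{p^r}\equiv P(\boldsymbol{x}^{p^r})\pmod{p^r}$, is false. The correct lifted Frobenius congruence (cited in the paper from Rowland--Yassawi) is
\begin{equation*}
  P(\boldsymbol{x})^{p^r}\equiv P(\boldsymbol{x}^p)^{p^{r-1}}\pmod{p^r},
\end{equation*}
with only \emph{one} variable substitution and the remaining power $p^{r-1}$ left on the outside. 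Iterating $F^p\equiv F(\boldsymbol{x}^p)\pmod{p}$ loses one power of $p$ in the modulus at each step, so your ``lifting-the-exponent'' iteration only recovers the congruence modulo $p$, not modulo $p^r$. Concretely, $(1+x)^9\equiv(1+x^3)^3=1+3x^3+3x^6+x^9\pmod{9}$, which is not congruent to $1+x^9$ modulo $9$.

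This error propagates to your final matching step: $\ct[P(\boldsymbol{x}^{p^r})^n]$ equals $\ct[P(\boldsymbol{x})^n]$ exactly (relabeling variables), not $\ct[P^{p^{r-1}n}]$ modulo $p^r$; those two quantities agree only modulo $p$ in general. A counterexample to your chain: take $P=x^{-1}+1+x$, $Q=1$, $k=0$, $p=3$, $r=2$, $n=1$, so $A(n)$ is the central trinomial sequence. The lemma (correctly) gives $A(9)=3139\equiv 7\equiv\ct[P^3]\pmod{9}$, whereas your argument would output $A(0)\ct[P(\boldsymbol{x}^9)]=\ct[P]=1\pmod{9}$. The fix is to drop the induction entirely, as the paper does: one application of the displayed congruence gives $A(p^rn+k)\equiv\ct[P(\boldsymbol{x}^p)^{p^{r-1}n}\,P^kQ]\pmod{p^r}$, and since the first factor has all exponents in $p\ZZ^d$, your decoupling lemma (which needs only $p>\deg(P^kQ)$, not a bound against $p^r$) applies verbatim and yields $A(k)\,\ct[P(\boldsymbol{x})^{p^{r-1}n}]$ in a single step.
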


\begin{proof}
  Recall that (see, for instance, \cite[Proposition~1.9]{ry-diag13}), for
  any Laurent polynomial $F \in \mathbb{Z}_p [\boldsymbol{x}^{\pm 1}]$,
  \begin{equation}
    F (\boldsymbol{x})^{p^r} \equiv F (\boldsymbol{x}^p)^{p^{r - 1}} \pmod{p^r} . \label{eq:power:pr}
  \end{equation}
  As in \cite{s-schemes}, it follows from \eqref{eq:power:pr} that
  \begin{eqnarray*}
    A (p^r n + k) & = & \operatorname{ct} [P (\boldsymbol{x})^{p^r n} P (\boldsymbol{x})^k
    Q (\boldsymbol{x})]\\
    & \equiv & \operatorname{ct} [P (\boldsymbol{x}^p)^{p^{r - 1} n} P (\boldsymbol{x})^k
    Q (\boldsymbol{x})] \pmod{p^r}\\
    & = & \operatorname{ct} [P (\boldsymbol{x})^{p^{r - 1} n} \Lambda_p [P
    (\boldsymbol{x})^k Q (\boldsymbol{x})]],
  \end{eqnarray*}
  where $\Lambda_p$ denotes the section operator
  \begin{equation*}
    \Lambda_p \left[ \sum_{\boldsymbol{k} \in \mathbb{Z}^d} a_{\boldsymbol{k}}
     \boldsymbol{x}^{\boldsymbol{k}} \right] = \sum_{\boldsymbol{k} \in
     \mathbb{Z}^d} a_{p\boldsymbol{k}} \boldsymbol{x}^{\boldsymbol{k}} .
  \end{equation*}
  If $p > \deg (P^k Q)$, then
  \begin{equation*}
    \Lambda_p [P (\boldsymbol{x})^k Q (\boldsymbol{x})] = \operatorname{ct} [P
     (\boldsymbol{x})^k Q (\boldsymbol{x})] = A (k)
  \end{equation*}
  and the claim follows.
\end{proof}

\begin{example} \label{ex:fibonacci}
  For the Fibonacci numbers $F(n)$, it is a well-known consequence of \eqref{eq:fib} that, modulo any prime $p$, we have the congruences
  \begin{align*}
    F(p) \equiv \begin{cases}
      1, &\text{if $p \equiv 1,4 \bmod{5}$}, \\
      -1, &\text{if $p \equiv 2,3 \bmod{5}$},
    \end{cases}
    \pmod{p}.
  \end{align*}
  Since this is incompatible with \cref{lem:ct:pr:large} (setting $r=n=1$ and $k=0$ implies that $A(p) \equiv A(0)\cdot c \pmod{p}$ for some $c \in \QQ$ that is independent of~$p$), we see that $F(n)$ is not a constant term sequence.

  On the other hand, by \cref{thm:minton}, the Lucas numbers $L(n)$ (from~\eqref{eq:lucas:L}) satisfy the congruences $L(p^r n) \equiv L(p n) \pmod{p}$ for $r\geq1$ and $p$ large enough.
  As such, \cref{lem:ct:pr:large} is not sufficient to conclude that $L(n)$ is not a constant term sequence.
  However, we will be able to conclude in \cref{ex:lucas} the stronger result that both the Fibonacci numbers and the Lucas numbers cannot be expressed as a $\QQ$-linear combination of constant terms.
\end{example}

\begin{corollary}
  Let $A (n) = \operatorname{ct} [P (\boldsymbol{x})^n Q (\boldsymbol{x})]$ with $P, Q \in
  \mathbb{Z}_p [\boldsymbol{x}^{\pm 1}]$. Then
  \begin{equation*}
    A (p^s n + k) \equiv A (p^r n + k) \pmod{p^r}
  \end{equation*}
  for all integers $n, k \geq 0$ and $s \geq r \geq 1$,
  provided that $p > \deg (P^k Q)$.
\end{corollary}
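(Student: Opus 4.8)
The plan is to derive this corollary directly from \cref{lem:ct:pr:large}, which already gives us, for a fixed $k$ with $p > \deg(P^k Q)$, the congruence
\[
  A(p^r n + k) \equiv A(k)\operatorname{ct}[P(\boldsymbol{x})^{p^{r-1}n}] \pmod{p^r}
\]
for all $r \geq 1$ and all $n \geq 0$. The key observation is that the only part of the right-hand side depending on $r$ is the factor $\operatorname{ct}[P(\boldsymbol{x})^{p^{r-1}n}]$, so the statement reduces to controlling how this constant term changes as the exponent is multiplied by further powers of $p$.

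First I would apply the lemma twice with the same base index $k$: once with exponent $r$ and once with exponent $s \geq r$. Writing $B(m) \coloneqq \operatorname{ct}[P(\boldsymbol{x})^m]$ for brevity (a constant term sequence with $Q = 1$, hence covered by the same lemma), the lemma yields $A(p^r n + k) \equiv A(k)\,B(p^{r-1}n) \pmod{p^r}$ and $A(p^s n + k) \equiv A(k)\,B(p^{s-1}n) \pmod{p^s}$. Since $s \geq r$, the second congruence holds a fortiori modulo $p^r$. Thus it suffices to show
\[
  A(k)\,B(p^{s-1}n) \equiv A(k)\,B(p^{r-1}n) \pmod{p^r},
\]
and for this it is enough to establish $B(p^{s-1}n) \equiv B(p^{r-1}n) \pmod{p^r}$.

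The main step, then, is the congruence $B(p^{s-1}n) \equiv B(p^{r-1}n) \pmod{p^r}$ for $s \geq r \geq 1$, where $B(m) = \operatorname{ct}[P(\boldsymbol{x})^m]$. This is exactly the Gauss-type congruence for the $Q=1$ constant term sequence $B$, and it follows from the same engine as the lemma itself, namely the Frobenius-type identity \eqref{eq:power:pr}. Indeed, applying \eqref{eq:power:pr} to $P$ gives $P(\boldsymbol{x})^{p^r} \equiv P(\boldsymbol{x}^p)^{p^{r-1}} \pmod{p^r}$, and taking constant terms (noting $\operatorname{ct}[P(\boldsymbol{x}^p)^{p^{r-1}}] = \operatorname{ct}[P(\boldsymbol{x})^{p^{r-1}}] = B(p^{r-1})$) yields the base case $B(p^r) \equiv B(p^{r-1}) \pmod{p^r}$; raising to a power $n$ and chaining from $r$ up to $s$ (telescoping through the intermediate exponents, each step only costing one factor of $p$) upgrades this to $B(p^{s-1}n) \equiv B(p^{r-1}n) \pmod{p^r}$. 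The cleanest route is in fact to invoke \cref{lem:ct:pr:large} itself in the special case $Q = 1$, $k = 0$: it gives $B(p^r n) = A'(p^r n) \equiv A'(0)\,B(p^{r-1}n) = B(p^{r-1}n) \pmod{p^r}$ since $A'(0) = \operatorname{ct}[1] = 1$ and $p > \deg(1) = 0$ is automatic.

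The step I expect to be the main obstacle is being careful about the degree hypothesis and about whether $B$ genuinely satisfies the Gauss congruence at the same modulus $p^r$ rather than only $p^{r-1}$. The subtlety is that \cref{lem:ct:pr:large} applied to $B$ controls $B(p^r n)$ against $B(p^{r-1}n)$ modulo $p^r$, so telescoping from exponent $r$ to exponent $s$ produces congruences at several different moduli $p^r, p^{r+1}, \dots, p^s$; one must check that these all reduce consistently modulo the single target modulus $p^r$ and that multiplying by the constant $A(k)$ does not spoil the estimate. Since $A(k) \in \mathbb{Z}_p$ (as $P, Q \in \mathbb{Z}_p[\boldsymbol{x}^{\pm 1}]$ forces $A(k) = \operatorname{ct}[P^k Q] \in \mathbb{Z}_p$), multiplication by $A(k)$ preserves congruences modulo $p^r$, so this concern is resolved once integrality is noted; the rest is the telescoping bookkeeping, which the lemma handles uniformly.
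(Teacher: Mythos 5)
Your proof is correct, and it runs on the same engine as the paper's, but the packaging is genuinely different. The paper applies \cref{lem:ct:pr:large} at level $s$ and then closes the gap in a single stroke \emph{inside} the constant term: iterating \eqref{eq:power:pr} gives $P(\boldsymbol{x})^{p^{s-1}n} \equiv P(\boldsymbol{x}^{p^{s-r}})^{p^{r-1}n} \pmod{p^r}$, and since the constant term is invariant under the substitution $\boldsymbol{x} \mapsto \boldsymbol{x}^{p^{s-r}}$, one gets $A(p^s n + k) \equiv A(k)\operatorname{ct}[P(\boldsymbol{x})^{p^{r-1}n}] \pmod{p^r}$ directly, which by the lemma at level $r$ is $A(p^r n + k)$ modulo $p^r$. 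You instead bridge at the level of sequences: you observe that the lemma applied to itself with $Q = 1$ and $k = 0$ (where the degree hypothesis $p > \deg(1) = 0$ is vacuous) yields the Gauss congruences $B(p^j n) \equiv B(p^{j-1} n) \pmod{p^j}$ for $B(m) = \operatorname{ct}[P(\boldsymbol{x})^m]$, and you telescope these across the moduli $p^{s-1}, \dots, p^r$, all of which dominate the target modulus $p^r$. What your route buys is the isolation of the Gauss congruences for $Q = 1$ constant terms as a standalone statement --- a fact the paper only records later, in \cref{lem:minton:ct} --- while avoiding any explicit appeal to substitution-invariance of $\operatorname{ct}$; what the paper's route buys is brevity (a three-line chain at a single modulus, with no multi-modulus bookkeeping). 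One small nit: your parenthetical ``each step only costing one factor of $p$'' in the raw-chaining sketch is misleading --- each telescoping step holds modulo $p^j$ with $j \geq r$, so nothing is lost relative to $p^r$ --- but your final paragraph corrects this, and your remark that $A(k) \in \mathbb{Z}_p$ (so multiplication by $A(k)$ preserves congruences) is exactly the integrality point that needs checking.
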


\begin{proof}
  It follows from \cref{lem:ct:pr:large} and \eqref{eq:power:pr} that
  \begin{eqnarray*}
    A (p^s n + k) & \equiv & A (k) \operatorname{ct} [P (\boldsymbol{x})^{p^{s - 1} n}]
    \pmod{p^s}\\
    & \equiv & A (k) \operatorname{ct} [P (\boldsymbol{x}^{p^{s - r}})^{p^{r - 1} n}]
    \pmod{p^r}\\
    & = & A (k) \operatorname{ct} [P (\boldsymbol{x})^{p^{r - 1} n}],
  \end{eqnarray*}
  as claimed.
\end{proof}

The simple but useful special case $r = 1$ and $k = 0$ of the corollary above takes the following
form. Here, $p$ is large enough if $p > \deg (Q)$ and $P, Q \in \mathbb{Z}_p
[\boldsymbol{x}^{\pm 1}]$.

\begin{corollary}
  \label{cor:ct:pr:large}Let $A (n) = \operatorname{ct} [P (\boldsymbol{x})^n Q
  (\boldsymbol{x})]$ with $P, Q \in \mathbb{Q} [\boldsymbol{x}^{\pm 1}]$. If $p$
  is a large enough prime, then, for all integers $n \geq 0$ and $r \geq 1$,
  \begin{equation*}
    A (p^r n) \equiv A (p n) \pmod{p} .
  \end{equation*}
\end{corollary}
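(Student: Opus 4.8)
The plan is to obtain this as the special case $k=0$, $r=1$ of the preceding corollary, the only additional work being the passage from $p$-adic integer coefficients to merely rational ones. First I would observe that the preceding corollary is stated under the hypothesis $P,Q \in \ZZ_p[\boldsymbol{x}^{\pm 1}]$, whereas here we are only given $P,Q \in \QQ[\boldsymbol{x}^{\pm 1}]$. Since $P$ and $Q$ each have only finitely many (Laurent-monomial) coefficients, and each such coefficient is a rational number, only finitely many primes $p$ can divide the denominator of any coefficient of $P$ or of $Q$. For every prime $p$ outside this finite exceptional set one has automatically $P,Q \in \ZZ_p[\boldsymbol{x}^{\pm 1}]$, so the integral hypothesis is satisfied.

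Next I would record the second constraint on $p$. Setting $k=0$, the degree bound appearing in \cref{lem:ct:pr:large} and in the preceding corollary becomes $p > \deg(P^k Q) = \deg(Q)$, which again excludes only finitely many primes. Declaring a prime to be \emph{large enough} precisely when it neither divides any of the relevant denominators nor violates $p > \deg(Q)$, both hypotheses of the preceding corollary are in force. This is exactly the clarification already announced in the sentence preceding the statement, namely that $p$ is large enough as soon as $p > \deg(Q)$ and $P,Q \in \ZZ_p[\boldsymbol{x}^{\pm 1}]$.

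Finally I would simply invoke the preceding corollary with $k=0$ and $r=1$: for all integers $n \geq 0$ and all $s \geq 1$ it yields $A(p^s n) \equiv A(p n) \pmod{p}$. Renaming the dummy index $s$ as $r$ gives precisely the asserted congruence $A(p^r n) \equiv A(p n) \pmod{p}$ for all $n \geq 0$ and $r \geq 1$.

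I do not anticipate any genuine obstacle: the entire arithmetic content is already carried by \cref{lem:ct:pr:large} and the corollary derived from it. The single point requiring (minor) care is the clearing of denominators, that is, checking that all but finitely many primes satisfy $P,Q \in \ZZ_p[\boldsymbol{x}^{\pm 1}]$, which is what legitimizes the phrase ``large enough prime'' and lets the integral version of the statement be applied verbatim.
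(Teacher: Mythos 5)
Your proposal is correct and takes essentially the same route as the paper, which likewise obtains the statement as the special case $k=0$, $r=1$ of the preceding corollary (renaming $s$ to $r$), with ``large enough'' meaning exactly that $p > \deg(Q)$ and that $P, Q \in \mathbb{Z}_p[\boldsymbol{x}^{\pm 1}]$ --- the latter excluding only the finitely many primes dividing a denominator of some coefficient of $P$ or $Q$, just as you argue. Your explicit denominator-clearing remark only spells out what the paper's one-line justification leaves implicit.
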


\section{\texorpdfstring{$C$}{C}-finite sequences that are constant terms}\label{sec:C:ct}

In this section, we prove our main result, \cref{thm:C:ct} stated in the introduction. We thus classify those C-finite sequences that are constant terms or linear combinations of such. We start by proving the following weaker version, since it illustrates well our approach and the usefulness of the congruences proved in \cref{sec:cong}. We then extend the argument to prove \cref{thm:C:ct} in full generality.

\begin{proposition} \label{prop:charroots:rational}
  Let $A (n)$ be a C-finite sequence. $A (n)$ is a $\mathbb{Q}$-linear
  combination of constant terms if and only if all characteristic roots are
  rational.
\end{proposition}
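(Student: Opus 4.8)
The plan is to prove both implications, leaning on the congruence machinery of \cref{sec:cong} for necessity and on an explicit construction for sufficiency.

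For the \emph{sufficiency} direction, suppose all characteristic roots $\lambda_1,\dots,\lambda_d$ of $A(n)$ are rational. By the decomposition \eqref{eq:C:lincomb}, $A(n)$ is a $\QQ$-linear combination of terms of the form $n^r\lambda^n$ (with $\lambda\in\QQ$) together with a finitely supported part $A_0(n)$. Since the class of $\QQ$-linear combinations of constant terms is itself a $\QQ$-vector space, it suffices to exhibit each such building block as a constant term (or a combination of finitely many). First I would handle $\lambda^n$ itself: the single-variable identity $\lambda^n = \ct[(\lambda x)^n \cdot x^{-0}]$ trivially shows $\ct[(\lambda)^n]=\lambda^n$ using the empty/constant Laurent polynomial, or more robustly $\ct[(\lambda x + \lambda x^{-1})^n\,Q]$ can be tuned; the cleanest choice is simply that the constant Laurent polynomial $P=\lambda$ gives $\ct[\lambda^n\cdot 1]=\lambda^n$. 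For the factor $n^r$, I would use the standard trick that $n^r\lambda^n$ arises from constant terms via a derivative-style identity: concretely, $\ct\!\left[(x^{-1}+\lambda x)^n Q(x)\right]$ produces binomial-type sums, and by taking suitable $\QQ$-combinations one recovers $\binom{n}{j}\lambda^{\,n}$-type expressions, whose $\QQ$-span contains all $n^r\lambda^n$ for $r<m$. The finitely supported $A_0(n)$ is trivially a constant term since any single Kronecker delta $\delta_{n,k}$ equals $\ct[(x)^n x^{-k}]$. Summing these representations establishes that $A(n)$ is a finite $\QQ$-linear combination of constant terms.

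For the \emph{necessity} direction, suppose $A(n)$ is a $\QQ$-linear combination of constant terms, say $A(n)=\sum_{i} q_i A_i(n)$ with each $A_i(n)=\ct[P_i^n Q_i]$. By \cref{cor:ct:pr:large}, each $A_i$ satisfies $A_i(p^r n)\equiv A_i(p n)\pmod p$ for all $r\ge1$ and all large enough $p$; taking the $\QQ$-linear combination (and clearing the bounded denominators $q_i$, which are invertible mod $p$ for large $p$) shows $A(p^r n)\equiv A(p n)\pmod p$ holds for $A$ itself for all large $p$. Now \cref{lem:C:minton:sep} applies and tells me that the separable part $A^{\operatorname{sep}}(n)$ is a trace sequence. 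A trace sequence has all characteristic roots of multiplicity one with Galois-symmetric coefficients; in particular its characteristic roots come in full sets of Galois conjugates. The remaining task is to upgrade "trace sequence" to "all roots rational." Here I would argue that if some characteristic root $\lambda$ of $A(n)$ were irrational, it has a nontrivial Galois conjugate $\lambda'\ne\lambda$ with the same minimal polynomial; I must rule this out. The key additional leverage beyond \cref{lem:C:minton:sep} is the \emph{single} congruence $A(pn)\equiv A(n)\cdot c\pmod p$ (not just $A(p^r n)\equiv A(pn)$) coming from the $r=n=1$, $k=0$ form of \cref{lem:ct:pr:large}: for a genuine constant term, $A(p)\equiv c\cdot A(0)\pmod p$ with $c$ \emph{independent of $p$}, which pins down $A(p)\bmod p$ to a single residue for all large $p$. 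For a trace sequence $\operatorname{Tr}(\theta^n)$ with $\theta$ of degree $>1$, the value $A(p)\bmod p$ equals $\operatorname{Tr}(\theta)\bmod p$ by Frobenius, which is fine, but the \emph{coefficient-matching} forced by writing $A$ as a combination of individual constant terms (each with its own rational $c_i$) is what excludes conjugate roots carrying equal coefficients unless those roots are already rational.

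\textbf{The main obstacle} I anticipate is exactly this last upgrade: \cref{lem:C:minton:sep} only yields that $A^{\operatorname{sep}}$ is a trace sequence, which permits irrational roots appearing in Galois-conjugate families with matched coefficients (the Lucas numbers are the cautionary example). To force rationality I expect I must exploit that $A$ is a combination of finitely many \emph{individual} constant terms $A_i$, apply the sharper per-summand congruence $A_i(p^r n)\equiv A_i(k)\,\ct[P_i^{p^{r-1}n}]\pmod{p^r}$ from \cref{lem:ct:pr:large}, and show that the $p$-dependence of $\ct[P_i^{p^{r-1}n}]\bmod p$ (again governed by Frobenius acting on the roots of each individual summand) is incompatible with any genuinely conjugate pair surviving in the final linear combination. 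In other words, the necessity proof must track the characteristic roots summand-by-summand rather than only through the aggregate $A^{\operatorname{sep}}$, and the delicate point is proving that Galois conjugation cannot be "absorbed" into the finitely many rational coefficients $q_i$. I would close this gap by a linear-algebra argument over $\overline{\mathbb F}_p$, comparing the Frobenius orbits of the roots against the rigidity forced by the constant-$c_i$ congruences, concluding that every surviving root must be fixed by Frobenius for all large $p$, hence rational.
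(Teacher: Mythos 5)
Your sufficiency direction is essentially the paper's argument (binomial-type constant terms spanning $n^r\lambda^n$, plus Kronecker deltas for the finitely supported part) and is fine. The necessity direction, however, has a genuine gap, and you have diagnosed its location correctly yourself: applying \cref{cor:ct:pr:large} and \cref{lem:C:minton:sep} to $A$ only yields that $A^{\operatorname{sep}}$ is a trace sequence, which does not exclude irrational roots (the Lucas numbers are a trace sequence with irrational roots). But the ``upgrade'' you propose is a sketch, not an argument: the per-summand congruences, the ``rigidity forced by the constant-$c_i$ congruences,'' and the closing ``linear-algebra argument over $\overline{\mathbb{F}}_p$'' are never carried out, and the central claim --- that a full set of Galois-conjugate roots with matched coefficients cannot survive in a finite $\QQ$-combination of constant terms --- is exactly what must be proved. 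Note also that the one concrete constraint you do extract, namely $A(p)\equiv c\cdot A(0)\pmod p$ with $c$ independent of $p$ (and its linear-combination analogue, a single residue for $A(p)\bmod p$), is \emph{satisfied} by the Lucas numbers, which are not a combination of constant terms (\cref{ex:lucas}); so that constraint alone cannot close the gap, and nothing in your proposal quantifies what additional leverage the summand-by-summand Frobenius tracking would provide.

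The idea you are missing is much simpler and requires no per-summand analysis: exploit shift-closure. Since $\ct[P(\boldsymbol{x})^{n+k}Q(\boldsymbol{x})]=\ct[P(\boldsymbol{x})^n\,(P(\boldsymbol{x})^kQ(\boldsymbol{x}))]$, every shift $A(n+k)$ is again a $\QQ$-linear combination of constant terms, and the shifts span the space $V_A$ of all rational solutions of the minimal constant-coefficient recurrence for $A$; hence \emph{every} sequence in $V_A$ is a combination of constant terms, and \cref{cor:ct:pr:large} together with \cref{lem:C:minton:sep} applies to each of them. Now if some characteristic root $\lambda$ were irrational, with conjugates $\lambda_1,\dots,\lambda_d$, $d\geq 2$, then the rational sequences of the form $c_1\lambda_1^n+\cdots+c_d\lambda_d^n$ form a $d$-dimensional subspace of $V_A$; every such sequence equals its own separable part, yet the only trace sequences among them are the multiples of $\lambda_1^n+\cdots+\lambda_d^n$. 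Choosing $B(n)$ in this subspace that is not a trace sequence (possible since $d\geq 2$), the two lemmas force $B^{\operatorname{sep}}(n)=B(n)$ to be a trace sequence --- a contradiction. In short, the machinery you assembled already suffices; the trick is to apply it not to $A$ itself but to a well-chosen element of the solution space, which sidesteps entirely the question of how Galois conjugation interacts with the individual summands.
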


\begin{proof}
  For one direction, note that
  \begin{equation}
    \operatorname{ct} [(x + \lambda)^{n} (\lambda/x)^{r}] = \binom{n}{r} \lambda^{n} =
    \frac{n (n - 1) \cdots (n - r + 1)}{r!} \lambda^{n}. \label{eq:ct:exp}
  \end{equation}
  Varying $r$, the right-hand side forms a basis for the span of the sequences $(n^r \lambda^{n})_{n \geq 0}$. A sequence $A_0(n)$ of finite support can be represented as \[
    A_0(n) = \ct[x^n (A(0)+A(1)x^{-1}+\cdots+A(N)x^{-N})],
  \]
  where $N$ is the largest integer for which $A_0(N)$ is non-zero. It therefore follows with \eqref{eq:C:lincomb} that, if all characteristic roots $\lambda$ are rational, then $A (n)$ can
  be represented as a linear combination of constant terms.
  
  On the other hand, suppose that $A (n)$ is a linear combination of constant
  terms. Note that this implies that any shift $A (n + k)$, where $k \in
  \mathbb{Z}_{\geq 0}$, is a linear combination of constant terms as
  well. These shifts generate the space $V_A$ of rational solutions of the
  minimal constant-coefficient recursion satisfied by $A (n)$. Thus, any
  sequence in $V_A$ is a linear combination of constant terms. Assume, for
  contradiction, that there is a characteristic root $\lambda$ that is not
  rational. Then among the sequences in $V_A$ there is always a sequence $B (n)$ of
  the form \eqref{eq:C:sep:lincomb} (that is, $B (n)$ equals its separable
  part) which is not a trace sequence.
  
  For instance, if $\lambda_1, \ldots, \lambda_d$ are the roots of the minimal
  polynomial of~$\lambda$, then the space $V_{\lambda}$ of rational sequences
  of the form $b (n) = c_1 \lambda_1^n + \cdots + c_d \lambda_d^n$, with $c_1,
  \ldots, c_d \in \overline{\mathbb{Q}}$, is a $d$-dimensional subspace of $V_A$.
  Clearly, each sequence in $V_{\lambda}$ is of the form
  \eqref{eq:C:sep:lincomb}. Note that $\lambda_1^n + \cdots + \lambda_d^n$ and
  its multiples are the only trace sequences in $V_{\lambda}$. Since $d \geq 2$,
  we can therefore choose a sequence $B (n)$ in $V_{\lambda}$ that is not a
  trace sequence.
  
  It follows from \cref{cor:ct:pr:large} that $B (n)$ satisfies the
  congruences
  \begin{equation*}
    B (p^r n) \equiv B (p n) \pmod{p}
  \end{equation*}
  for all $r \geq 1$ and all large enough primes $p$.
  \cref{lem:C:minton:sep} therefore implies that $B^{\operatorname{sep}} (n) = B
  (n)$ is a trace sequence. This is a contradiction, and we conclude that all
  characteristic roots must be rational.
\end{proof}

\begin{example}\label{ex:lucas}
  Recall from \eqref{eq:fib} that the Fibonacci numbers $F(n)$ are C-finite with characteristic roots $(1\pm\sqrt{5})/2$.
  Since these are not rational, it follows from \cref{prop:charroots:rational} that $F(n)$ cannot be expressed as a linear combination of constant terms.

  The same argument applied to \eqref{eq:lucas:L} shows that the Lucas numbers $L(n)$ cannot be expressed as a linear combination of constant terms either.
  Alternatively, this can also be concluded from the relationship
    \[
    2L(n+1) - L(n) = 5F(n)
    \]
  combined with the fact that Fibonacci numbers are not a sum of constant terms.
\end{example}

We next prove the case $r = 1$ of \cref{thm:C:ct}, that is \cref{thm:C:ct:1:pf}, stating that a C-finite sequence $A (n)$ is a single constant term if and only if it has a single characteristic root $\lambda$ and $\lambda \in \mathbb{Q}$.

\begin{proof}[Proof of \cref{thm:C:ct:1:pf}]
  It follows from \eqref{eq:C:lincomb} and \eqref{eq:ct:exp} that if $A (n)$
  is a C-finite sequence with the single characteristic root $\lambda \in
  \mathbb{Q}$ (possibly repeated or possibly 0), then $A (n)$ is a constant term, namely $A
  (n) = \operatorname{ct} [(x + \lambda)^n Q (x^{- 1})]$ for a suitable polynomial $Q
  (x)$.
  
  On the other hand, suppose that $A (n) = \operatorname{ct} [P (\boldsymbol{x})^n Q
  (\boldsymbol{x})]$ is a single constant term. Since $A (n)$ is a C-finite
  sequence, it has a representation of the form \eqref{eq:C:lincomb} or,
  equivalently,
  \begin{equation}
    A (n) = A_0 (n) + \sum_{j = 1}^d \lambda_j^n p_j (n)
    \label{eq:C:lincomb:poly}
  \end{equation}
  for pairwise distinct $\lambda_j \in \overline{\mathbb{Q}}^{\times}$ and nonzero
  $p_j (n) \in \overline{\mathbb{Q}} [n]$. As before, $A_0 (n)$ is a sequence with
  finite support, corresponding to the characteristic root $0$. It follows from \cref{prop:charroots:rational} that all characteristic roots
  $\lambda_j$ are rational, and this further implies that $p_j (n) \in
  \mathbb{Q} [n]$.
  
  Let $c_0 = \operatorname{ct} [P (\boldsymbol{x})] \in \mathbb{Q}$. From
  \cref{lem:ct:pr:large} (with $r = 1$ and $n = 1$) it follows that
  \begin{equation*}
    A (p + n) \equiv A (n) \cdot c_0 \pmod{p}
  \end{equation*}
  for all $n \geq 0$ and all large enough primes $p$ (namely, $p > \deg
  (P^n Q)$ and large enough so that $c_0 \in \mathbb{Z}_p$). Combining this
  congruence with \eqref{eq:C:lincomb:poly} and applying Fermat's little
  theorem to reduce $\lambda_j^{p + n}$ and $p_j (p + n)$ modulo $p$ to
  $\lambda_j^{n + 1}$ and $p_j (n)$ respectively, we find that
  \begin{equation}
    \sum_{j = 1}^d \lambda_j^{n + 1} p_j (n) \equiv c_0 \left[ A_0 (n) +
    \sum_{j = 1}^d \lambda_j^n p_j (n) \right] \pmod{p}
    \label{eq:Apn:modp}
  \end{equation}
  for all large enough $p$ (in particular, so that $p$ is larger than any
  denominator occuring in the $p_j (n)$ and so that $A_0 (p + n) = 0$). Note
  that both sides of~\eqref{eq:Apn:modp} are independent of $p$. Since they
  agree modulo any large enough $p$, it follows that they must be equal (for
  each fixed value of $n$). Accordingly, we have the identity
  \begin{equation}
    \sum_{j = 1}^d \lambda_j^{n + 1} p_j (n) = c_0 \left[ A_0 (n) + \sum_{j =
    1}^d \lambda_j^n p_j (n) \right] \label{eq:Apn} \quad \text{for all $n \geq 0$.}
  \end{equation}
  Note that both sides of \eqref{eq:Apn} are
  C-finite sequences so that, because the representation
  \eqref{eq:C:lincomb:poly} is unique, we must have, in particular, $c_0 A_0
  (n) = 0$. If $c_0 = 0$ then it follows by comparison with the left-hand side
  of \eqref{eq:Apn} that $d = 0$ so that $A (n) = A_0 (n)$ with the single
  characteristic root $\lambda = 0$. In the other case, that is if $c_0 \neq 0$, we have $A_0 (n) = 0$, so $0$ is not a characteristic root. Further comparing both sides of
  \eqref{eq:Apn}, we find that $\lambda_j = c_0$ for all $j$. Since the
  $\lambda_j$ are distinct, we conclude that $d = 1$ so that $A (n) =
  \lambda_1^n p_1 (n)$ with the single characteristic root $\lambda_1 \in
  \mathbb{Q}^{\times}$.
\end{proof}

We now extend \cref{thm:C:ct:1:pf} to the case of $r$-term
$\mathbb{Q}$-linear combinations of constant terms, thus proving our main result, \cref{thm:C:ct}. We recall that its statement is that a C-finite $A (n)$ sequence is an $r$-term $\mathbb{Q}$-linear combination of constant terms if and only if it has at most $r$ characteristic roots, all of which are rational.

\begin{proof}[Proof of \cref{thm:C:ct}]
  The case $r = 1$ is proved by \cref{thm:C:ct:1:pf}. With the same argument as in \eqref{eq:ct:exp} it follows that any C-finite sequence with $r$ characteristic roots, all of which are rational, can be
  represented as a linear combination of $r$ constant terms.
  
  Therefore, suppose that $r > 1$ and that
  \begin{equation*}
    A (n) = \operatorname{ct} [P_1 (\boldsymbol{x})^n Q_1 (\boldsymbol{x})] + \cdots +
     \operatorname{ct} [P_r (\boldsymbol{x})^n Q_r (\boldsymbol{x})]
  \end{equation*}
  is an $r$-term $\mathbb{Q}$-linear combination of constant terms with $P_j,
  Q_j \in \mathbb{Q} [\boldsymbol{x}^{\pm 1}]$. We need to show that $A (n)$ has
  at most $r$ characteristic roots, all of which are rational. As in the proof
  of \cref{thm:C:ct:1:pf}, we find that all characteristic roots of $A
  (n)$ are rational and that $A (n)$ can be represented in the form
  \eqref{eq:C:lincomb:poly} with $p_j (n) \in \mathbb{Q} [n]$.
  
  Let $c_j = \operatorname{ct} [P_j (\boldsymbol{x})] \in \mathbb{Q}$. It follows from
  \cref{lem:ct:pr:large} that
  \begin{equation*}
    A (p + n) \equiv c_1 \operatorname{ct} [P_1 (\boldsymbol{x})^n Q_1 (\boldsymbol{x})]
     + \cdots + c_r \operatorname{ct} [P_r (\boldsymbol{x})^n Q_r (\boldsymbol{x})] \pmod{p}
  \end{equation*}
  for all $n \geq 0$ and all large enough primes $p$. On the other hand, for large $p$,
  by Fermat's little theorem,
  \begin{equation*}
    A (p + n) \equiv \sum_{j = 1}^d \lambda_j^{n + 1} p_j (n) \pmod{p} .
  \end{equation*}
  Note that the right-hand sides of the last two congruences are independent
  of~$p$. Since the congruences hold modulo all large enough primes, we
  conclude that
  \begin{equation*}
    \sum_{j = 1}^d \lambda_j^{n + 1} p_j (n) = c_1 \operatorname{ct} [P_1
     (\boldsymbol{x})^n Q_1 (\boldsymbol{x})] + \cdots + c_r \operatorname{ct} [P_r
     (\boldsymbol{x})^n Q_r (\boldsymbol{x})] .
  \end{equation*}
  Note that the sequence
  \begin{equation*}
    B (n) \coloneqq \sum_{j = 1}^d \lambda_j^{n + 1} p_j (n) - c_1 A (n) =
     \sum_{j = 1}^d (\lambda_j - c_1) \lambda_j^n p_j (n) - c_1 A_0 (n)
  \end{equation*}
  is C-finite and is an $(r - 1)$-term $\mathbb{Q}$-linear combination of
  constant terms. By induction, we may conclude that $B (n)$ has at most $r -
  1$ characteristic roots, all of which are rational. By comparison with
  \eqref{eq:C:lincomb:poly}, we see that $A (n)$ has at most one more
  characteristic root than $B (n)$. Thus $A (n)$ has at most $r$
  characteristic roots, which is what we had to show.
\end{proof}

\cref{thm:C:ct} classifies those rational recursive sequences with constant coefficients which can be represented as a linear combination of $r$ constant terms. In particular, a rational C-finite sequence $A (n)$ is a linear combination of constant terms if and only if all of its characteristic roots are rational. It is natural to wonder whether we can restrict to integer sequences and conclude that all characteristic roots must be integral. This can be achieved by using the following proposition\footnote{The proof of Prop.~\ref{conj:new} was communicated to us by \href{https://sites.google.com/view/carlo-sanna-math}{Carlo Sanna} (Politecnico di Torino).},
that we could not locate in the vast literature on C-finite sequences.
\begin{proposition}\label{conj:new}
  Let $A (n)$ be a C-finite sequence with characteristic roots $\lambda_1,
  \ldots, \lambda_d \in \mathbb{Q}$. If $A (n)$ is an integer sequence, then
  $\lambda_1, \ldots, \lambda_d \in \mathbb{Z}$.
\end{proposition}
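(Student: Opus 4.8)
The plan is to show that a rational C-finite sequence whose characteristic roots are all rational must in fact have integral roots. Write the separable and polynomial parts together as
\begin{equation*}
  A(n) = A_0(n) + \sum_{j=1}^d \lambda_j^n\, p_j(n), \qquad \lambda_j \in \QQ^\times,\ p_j(n) \in \QQ[n],
\end{equation*}
valid for all large enough $n$, where the $\lambda_j$ are pairwise distinct and the $p_j$ are nonzero. Fix one root $\lambda := \lambda_j$ and write $\lambda = a/b$ in lowest terms with $b > 0$; the goal is to prove $b = 1$. The natural strategy is a $p$-adic one: pick any prime $p$ dividing the denominator $b$ and show that, if $b>1$, the sequence $A(n)$ would be forced to have unbounded $p$-adic denominators, contradicting $A(n) \in \ZZ$.

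First I would localize at a prime $p \mid b$. The other roots $\lambda_i$ ($i \neq j$), together with the coefficients of all the $p_i$, involve only finitely many primes in their denominators, so by enlarging $p$ appropriately—or rather, by separating the contribution of $\lambda$—I want to isolate the term $\lambda^n p_j(n)$ and track its $p$-adic valuation. The key point is that $v_p(\lambda^n) = -n\, v_p(b) \to -\infty$ as $n \to \infty$, so the term $\lambda^n p_j(n)$ has $p$-adic valuation tending to $-\infty$ along the subsequence where $p_j(n) \neq 0$. The difficulty is that cancellation among the different terms could in principle keep the total $p$-adically integral, so I cannot simply look at one term in isolation.

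To rule out cancellation, the cleanest approach is to use the structure of C-finite sequences over $\QQ_p$. Among the roots $\lambda_1,\dots,\lambda_d$, let $v := \min_i v_p(\lambda_i) < 0$ be the most negative valuation, and let $S$ be the set of indices achieving it. Multiplying through by a suitable power, one reduces to analyzing $\sum_{i \in S} \lambda_i^n p_i(n)$ modulo higher powers of $p$; since the distinct rational numbers $\lambda_i$ with $i \in S$ all have the same valuation but differ in their leading $p$-adic digits (they are distinct in $\QQ_p$), the standard linear-independence argument for exponential-polynomial sequences—applied now over $\QQ_p$ rather than $\CC$—shows that $\sum_{i\in S}\lambda_i^n p_i(n)$ cannot vanish identically and in fact dominates the $p$-adic size of $A(n)$ for large $n$. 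This yields $v_p(A(n)) \le v\,n + O(\log n) \to -\infty$, contradicting $A(n) \in \ZZ$ unless $v = 0$, i.e.\ unless every $\lambda_i \in \ZZ_p$. Running this over all primes $p$ gives $\lambda_i \in \ZZ$ for every $i$.

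The main obstacle I anticipate is making the non-cancellation step fully rigorous: one must argue that the minimal-valuation terms genuinely control $v_p(A(n))$ for infinitely many $n$, which requires a $p$-adic version of the fact that an exponential polynomial $\sum \mu_i^n q_i(n)$ with distinct nonzero $\mu_i$ (here the normalized roots) and nonzero $q_i$ does not vanish identically. This is precisely the uniqueness of the representation \eqref{eq:C:lincomb:poly}, now invoked over $\QQ_p$; the only care needed is that distinct rational roots remain distinct in $\QQ_p$ (immediate) and that the valuation estimate survives the polynomial factors $p_j(n)$, whose contribution is only $O(\log n)$ and hence negligible against the linear term $v\,n$.
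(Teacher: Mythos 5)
Your $p$-adic strategy can be completed, but as written it has a genuine gap at exactly the step you flag as the main obstacle, and one of your intermediate claims is false. Set $v=\min_i v_p(\lambda_i)<0$, let $S$ be the set of indices attaining it, put $\mu_i=p^{-v}\lambda_i\in\ZZ_p^{\times}$ and $C(n)=\sum_{i\in S}\mu_i^n p_i(n)$, so that $v_p(A(n))=vn+v_p\bigl(C(n)+\varepsilon(n)\bigr)$ with $v_p(\varepsilon(n))\to\infty$. Your claim that the minimal-valuation block ``dominates the $p$-adic size of $A(n)$ for large $n$'', giving $v_p(A(n))\le vn+O(\log n)$ for all large $n$, is not correct: $v_p(C(n))$ can be unbounded even when $C\not\equiv 0$; for instance, with $\lambda_1=2/3$, $\lambda_2=1/3$ and $p=3$ one gets $C(n)=2^n-1$ and $v_3\bigl(C(2\cdot 3^k)\bigr)=k+1\to\infty$. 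What your argument actually needs is $\liminf_n v_p(C(n))<\infty$, and this does \emph{not} follow from the uniqueness of the representation \eqref{eq:C:lincomb:poly} over $\QQ_p$ alone: uniqueness only yields $C\not\equiv 0$, and for non-unit roots non-vanishing is compatible with $v_p\to\infty$ (take $C(n)=p^n$). The missing ingredient is a lemma specific to \emph{unit} roots: $C(n)$ satisfies the recurrence with characteristic polynomial $\prod_{i\in S}(x-\mu_i)^{1+\deg p_i}$, whose coefficients lie in $\ZZ_p$ and whose constant term is a $p$-adic unit; running this recurrence \emph{backwards}, if $v_p(C(n))\ge K$ for all $n\ge M$ then $v_p(C(n))\ge K$ for all $n\ge 0$, so $v_p(C(n))\to\infty$ would force $C\equiv 0$, a contradiction. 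With this lemma, infinitely many $n$ satisfy $v_p(C(n))\le K_0$, and along that subsequence $v_p(A(n))\le vn+K_0\to-\infty$, contradicting $A(n)\in\ZZ$. So the plan is sound and fixable, but the step you attribute to ``precisely the uniqueness of the representation'' genuinely requires this additional unit-root argument.

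It is worth contrasting this with the paper's proof (due to Carlo Sanna), which sidesteps the cancellation problem entirely and needs no $p$-adic analysis. Since the $\lambda_i$ are pairwise distinct rationals, the Vandermonde matrix $V=(\lambda_i^{j-1})_{1\le i,j\le d}$ is invertible over $\QQ$, and the identity $\bigl[p_1(n)\lambda_1^n,\ldots,p_d(n)\lambda_d^n\bigr]\cdot V=\bigl[A(n),\ldots,A(n+d-1)\bigr]$ expresses each \emph{individual} term $p_i(n)\lambda_i^n$ as a fixed rational combination of integers, hence $p_i(n)\lambda_i^n\in\frac{1}{N_i}\ZZ$ with $N_i$ independent of $n$. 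The one-root case is then elementary: writing $\lambda_i=a/b$ in lowest terms and clearing denominators in $p_i$, one finds $b^n$ divides $N_i'q(n)$ for a nonzero $q\in\ZZ[x]$, which is impossible for $b>1$ by exponential versus polynomial growth. Your approach, once repaired, proves the same thing prime by prime; the Vandermonde trick isolates the terms in one stroke and replaces your delicate non-cancellation lemma with an archimedean growth estimate (see also \cref{rem:fatou} for a second route via Fatou's lemma).
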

\begin{proof}
By assumption, $A(n)$ is equal to $\sum_{i=1}^d p_i(n) \lambda_i^n$, where the $\lambda_i$'s are mutually distinct rational numbers and the $p_i(x)$'s are polynomials in $\QQ[x]$. We will prove that if for some $N\in\ZZ\setminus\{ 0 \}$ we have that $A(n)\in\frac{1}{N}\ZZ$ for all~$n$, then all the $\lambda_i$'s are integers. 

Let us start with the observation that this is true if $d=1$; indeed, if $\lambda_1 = a/b$ with coprime integers $a,b$, and $p_1(n)=q(n)/c$ with $q(x)\in\ZZ[x]$ and $c\in \ZZ$, then the assumption ``$A(n) = p_1(n)\lambda_1^n\in \frac{1}{N}\ZZ$ for all $n$'' implies
that $b^n$ divides $N q(n)$ for all $n$, hence $b=1$.

Let us now treat the general case. Denote by $V$ the $d\times d$ Vandermonde matrix attached with the $\lambda_i$'s, that is $V=(\lambda_i^{j-1})_{1\leq i,j \leq d}$. Since the $\lambda_i$'s are mutually distinct, $V$ is in $\text{GL}_d(\QQ)$. Therefore, the equality
\[
\left[ p_1(n)\lambda_1^n, \ldots, p_d(n)\lambda_d^n \right] \cdot V = 
\left[ A(n), \ldots, A(n+d-1) \right]
\]
implies that each term $p_i(n) \lambda_i^n$ is equal to $1/\det(V)$ times a linear combination of the integers $A(n), \ldots,  A(n+d-1)$ with fixed rational coefficients. In other terms,
each $p_i(n) \lambda_i^n$ is in $\frac{1}{N_i} \ZZ$ for some $N_i \in \ZZ\setminus \{ 0 \}$ independent of $n$.
By the case $d=1$, this implies that all the $\lambda_i$'s are integers.
\end{proof}

\begin{remark} \label{rem:fatou}
\cref{conj:new} also follows\footnote{We are indebted to the anonymous referee for pointing out this connection.} from ``Fatou's lemma'' (not to be confused with Fatou's famous result in Lebesgue integration theory)
 which states that if $f(x) \in \ZZ\ps{x}$ is a rational function, then one can write it as $f(x) = P(x)/Q(x)$ with $P,Q \in \ZZ[x]$ and $Q(0)=1$. This lemma is stated in Fatou's 1904 short communication~\cite[p.~313]{Fatou1904} and proved in his PhD thesis~\cite[p.~369]{Fatou1906}

To see how \cref{conj:new} follows from Fatou's lemma, note that since the generating function of the sequence $A(n)$ is rational with integer coefficients, we may represent it as in the lemma. The characteristic roots $\lambda_1,\dots,\lambda_d$ are the zeros of the reversal of $Q(x)$ which is monic because $Q(0)=1$. By Gauss' lemma it follows that the rational numbers $\lambda_1,\dots,\lambda_d$ are integers.
\end{remark}

We conclude this section with the following immediate consequence of \cref{thm:C:ct:1:pf}, \cref{thm:C:ct} and \cref{conj:new}:
\begin{corollary}
Let $A (n) \in \ZZ$ be a C-finite sequence. $A (n)$ is a constant term if and only if it has a single characteristic root   $\lambda$ and $\lambda \in \mathbb{Z}$. More generally, $A (n)$ is an $r$-term $\mathbb{Q}$-linear combination of constant terms if and only if it has at most $r$ distinct characteristic roots, all of which are integral.
\end{corollary}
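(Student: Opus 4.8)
The plan is to prove the final corollary by combining the three results already established: \cref{thm:C:ct:1:pf}, \cref{thm:C:ct}, and \cref{conj:new}. The corollary makes two claims about an integer-valued C-finite sequence $A(n) \in \ZZ$, and both reduce quickly to the preceding theorems once we upgrade ``rational characteristic roots'' to ``integral characteristic roots'' via \cref{conj:new}. So the structure is: first establish the rationality statements using the already-proved theorems, then invoke \cref{conj:new} to sharpen rational to integral, and check the reverse direction is immediate.

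For the first claim, I would argue as follows. Suppose $A(n) \in \ZZ$ is a single constant term. By \cref{thm:C:ct:1:pf}, $A(n)$ has a single characteristic root $\lambda \in \QQ$. Applying \cref{conj:new} (in the case $d = 1$) then forces $\lambda \in \ZZ$. Conversely, if $A(n)$ has a single characteristic root $\lambda \in \ZZ \subseteq \QQ$, then \cref{thm:C:ct:1:pf} already gives that $A(n)$ is a constant term; the integrality of $\lambda$ is not even needed for this direction, so the reverse implication is a direct restriction of the rational statement. For the second, more general claim, the argument is entirely parallel: if $A(n) \in \ZZ$ is an $r$-term $\QQ$-linear combination of constant terms, then \cref{thm:C:ct} yields at most $r$ distinct characteristic roots, all rational, and \cref{conj:new} upgrades all of them to integers; conversely, if all (at most $r$) characteristic roots are integral, they are in particular rational, so \cref{thm:C:ct} provides the desired representation.

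The one subtlety worth flagging is the hypothesis of \cref{conj:new}: it requires that all rational characteristic roots already be in hand, and that $A(n)$ be an integer sequence, in order to conclude integrality of the roots. Both hypotheses are met here — the rationality comes for free from \cref{thm:C:ct:1:pf} and \cref{thm:C:ct}, and integrality of $A(n)$ is the standing assumption of the corollary — so the application is clean. I expect \textbf{no genuine obstacle}; the entire content of the corollary is the bookkeeping of chaining these three prior results, and the only thing to be careful about is to invoke \cref{conj:new} only after rationality of the roots has been secured, since \cref{conj:new} takes rational roots as input rather than producing them.

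Concretely, the write-up can be very short:
\begin{proof}
If $A(n) \in \ZZ$ is a single constant term, then by \cref{thm:C:ct:1:pf} it has a single characteristic root $\lambda \in \QQ$, and \cref{conj:new} shows $\lambda \in \ZZ$. Conversely, a single integral characteristic root is in particular rational, so \cref{thm:C:ct:1:pf} gives the representation as a constant term. The general statement follows identically from \cref{thm:C:ct} in place of \cref{thm:C:ct:1:pf}: an integer $r$-term $\QQ$-linear combination of constant terms has at most $r$ distinct rational characteristic roots by \cref{thm:C:ct}, all of which are integral by \cref{conj:new}; conversely, at most $r$ integral characteristic roots are rational, so \cref{thm:C:ct} provides the required representation.
\end{proof}
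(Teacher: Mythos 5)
Your proposal is correct and matches the paper exactly: the paper states this corollary as an immediate consequence of \cref{thm:C:ct:1:pf}, \cref{thm:C:ct} and \cref{conj:new}, which is precisely the chain you spell out (rationality of the roots from the two theorems, upgraded to integrality by \cref{conj:new}, with the converse being trivial since $\ZZ \subseteq \QQ$). Your flagged subtlety about invoking \cref{conj:new} only after rationality is secured is exactly the right order of operations.
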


\section{An analog of Minton's theorem}\label{sec:minton:ct}

In this section, we record the following result which, though having a much simpler proof, is pleasingly similar to \cref{thm:minton} due to Minton \cite{minton-cong}. Moreover, this result gives a classification of constant
term sequences of the form $A (n) = \operatorname{ct} [P (\boldsymbol{x})^n]$ among all
constant term sequences $\operatorname{ct} [P (\boldsymbol{x})^n Q (\boldsymbol{x})]$.

\begin{proposition}
  \label{lem:minton:ct}Suppose $A (n) = \operatorname{ct} [P (\boldsymbol{x})^n Q
  (\boldsymbol{x})]$ with $P, Q \in \mathbb{Q} [\boldsymbol{x}^{\pm 1}]$. Then the
  following are equivalent:
  \begin{enumerate}
    \item For all large enough primes $p$ and for all $r\geq 1$, $A (n)$ satisfies the Gauss
    congruences~\eqref{eq:gauss}.
    
    \item For all large enough primes $p$, $A (n)$ satisfies the
    congruences~\eqref{eq:gauss:r1}.
    
    \item $A (n) = A (0) \operatorname{ct} [P (\boldsymbol{x})^n]$.
  \end{enumerate}
\end{proposition}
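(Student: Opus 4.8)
The plan is to prove the cyclic chain of implications $(c) \Rightarrow (a) \Rightarrow (b) \Rightarrow (c)$, where only the last implication requires genuine work; the first two are either trivial or already essentially available from the earlier congruence machinery.

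For $(c) \Rightarrow (a)$, I would simply invoke \cref{lem:ct:pr:large} directly. Assuming $A(n) = A(0)\operatorname{ct}[P(\boldsymbol{x})^n]$ with $A(0)=\operatorname{ct}[Q]$, one takes $Q=1$ (up to the scalar $A(0)$) and applies the lemma with $k=0$: for large $p$ one gets $A(p^r n) \equiv A(0)\operatorname{ct}[P(\boldsymbol{x})^{p^{r-1}n}] = A(p^{r-1}n) \pmod{p^r}$, which is exactly the Gauss congruence~\eqref{eq:gauss}. The implication $(a) \Rightarrow (b)$ is immediate by specializing $r=1$.

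The substance is $(b) \Rightarrow (c)$. The idea is to compare the hypothesis against \cref{lem:ct:pr:large}. Assume $P, Q \in \mathbb{Z}_p[\boldsymbol{x}^{\pm 1}]$ for all large enough $p$ (after clearing denominators, which only excludes finitely many primes), and set $c_0 = \operatorname{ct}[P(\boldsymbol{x})]$. Applying \cref{lem:ct:pr:large} with $r=1$ gives $A(pn) \equiv A(0)\operatorname{ct}[P(\boldsymbol{x})^{(p-1)n}\cdot P(\boldsymbol{x})^n]$... more precisely, using the lemma with $k=0$ one has $A(pn) \equiv A(0)\operatorname{ct}[P(\boldsymbol{x})^n] \pmod{p}$ for large $p$. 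The hypothesis $(b)$ states $A(pn) \equiv A(n) \pmod p$. Combining these, I obtain
\begin{equation*}
A(n) \equiv A(0)\operatorname{ct}[P(\boldsymbol{x})^n] \pmod p
\end{equation*}
for each fixed $n$ and all large enough primes $p$. Now the key observation, parallel to the arguments in the proofs of \cref{thm:C:ct:1:pf} and \cref{thm:C:ct}, is that both sides are fixed rational numbers independent of $p$; since two rationals congruent modulo infinitely many primes must be equal, I conclude $A(n) = A(0)\operatorname{ct}[P(\boldsymbol{x})^n]$ for every $n$, which is $(c)$.

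The main obstacle I anticipate is purely bookkeeping rather than conceptual: one must verify that \cref{lem:ct:pr:large} really delivers $A(pn) \equiv A(0)\operatorname{ct}[P(\boldsymbol{x})^n] \pmod p$ in the form needed, i.e.\ checking the degree condition $p > \deg(P^k Q)$ with $k=0$ reduces to $p > \deg Q$, which holds for all large $p$, and confirming that the ``independent of $p$'' passage from congruence to equality is valid (it is, since a nonzero rational number has only finitely many prime divisors in its numerator and denominator). No new inequality or growth estimate is required, so the argument should be short; the only care needed is to ensure the excluded finite set of ``small'' primes is handled uniformly, which is guaranteed by the phrase ``for all large enough primes.''
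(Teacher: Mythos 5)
Your proposal is correct and follows essentially the same route as the paper: the paper likewise proves (b) $\Rightarrow$ (c) by combining \cref{lem:ct:pr:large} (with $r=1$, $k=0$) with the hypothesis to get $A(n) \equiv A(0)\operatorname{ct}[P(\boldsymbol{x})^n] \pmod{p}$ for infinitely many $p$ and then upgrading the congruence to an equality, and it obtains (c) $\Rightarrow$ (a) from the same lemma with $k=0$ and $Q=1$, with (a) $\Rightarrow$ (b) being the trivial specialization $r=1$. Aside from one garbled (and immediately self-corrected) intermediate formula, the only difference is that you spell out the trivial implication explicitly.
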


\begin{proof}
  We conclude from \cref{lem:ct:pr:large} with $r = 1$ and $k = 0$ that
  \begin{equation*}
    A (p n) \equiv A (0) \operatorname{ct} [P (\boldsymbol{x})^n] \pmod{p}
  \end{equation*}
  for large enough $p$ (namely, if $p > \deg (Q)$). If $A (n)$ satisfies the
  congruences~\eqref{eq:gauss:r1}, we find that, for large enough $p$,
  \begin{equation*}
    A (n) \equiv A (0) \operatorname{ct} [P (\boldsymbol{x})^n] \pmod{p}.
  \end{equation*}
  In that case, since this congruence holds modulo
  infinitely many $p$, we conclude the equality $A (n) = A (0) \operatorname{ct} [P
  (\boldsymbol{x})^n]$. Thus the third condition follows from the second.
  
  To complete the proof, we need to show that the third condition implies the
  first. This follows from \cref{lem:ct:pr:large} with $k = 0$ and $Q =
  1$.
\end{proof}

\begin{remark}
  Note that \cref{lem:minton:ct} does not imply that if $A (n) = \operatorname{ct}
  [P (\boldsymbol{x})^n Q (\boldsymbol{x})]$ satisfies the Gauss
  congruences~\eqref{eq:gauss} for large enough primes, then $Q$ must be constant. For instance, for any $P (x) \in \mathbb{Z} [x^{\pm 1}]$, the constant terms $\operatorname{ct} [P (x^2)^n (1 + x)] = \operatorname{ct} [P (x^2)^n] = \operatorname{ct} [P (x)^n]$ satisfy the Gauss congruences for all primes $p$, even though the
  first constant term has a non-constant $Q$. \cref{lem:minton:ct} rather shows that if (a) or (b) are fulfilled, then $Q$ can be replaced by $\operatorname{ct} [Q]$.
\end{remark}

\section{Hypergeometric constant terms}\label{sec:hyp}

Exiting the class of C-finite sequences, we find it natural to ask (\cref{Q5} in the introduction): Which hypergeometric sequences\footnote{Recall that a sequence $A (n)$ is hypergeometric if it satisfies a first-order recurrence $\alpha (n) A (n + 1) = \beta (n) A (n)$ for some
polynomials $\alpha (n), \beta (n) \in \mathbb{Q} [n]$. For our purposes we will assume $\alpha (n) \neq 0$ for all $n \geq 0$.} $A (n)$ are constant term sequences?

The reason for the specialization to hypergeometric sequences is threefold. First, it can be argued that it is the easiest P-finite case. Second, similar to constant terms, hypergeometric sequences are not stable under addition. Finally, as we will see below in \cref{lem:ct:p:large:0:c}, the congruences proven in \cref{sec:cong} behave nicely with the hypergeometric assumption.\\

It follows from \cref{lem:ct:pr:large} (specialized to $n = 1$ and $r =
1$) that if $A (n) = \operatorname{ct} [P (\boldsymbol{x})^n Q (\boldsymbol{x})]$ with $P,
Q \in \mathbb{Q} [\boldsymbol{x}^{\pm 1}]$, then
\begin{equation*}
  A (p + k) \equiv A (k) \operatorname{ct} [P (\boldsymbol{x})] \pmod{p}
\end{equation*}
for all integers $k \geq 0$, provided that $p > \deg (P^k Q)$ and $P, Q
\in \mathbb{Z}_p [\boldsymbol{x}^{\pm 1}]$. In other words, if $A (n)$ is a
constant term, then there exists a constant $c \in \mathbb{Q}$ such that, for
each $k \in \mathbb{Z}_{\geq 0}$, the congruences
\begin{equation}
  A (p + k) \equiv A (k) \cdot c \pmod{p} \label{eq:ct:p:large:k:c}
\end{equation}
hold for all large enough primes~$p$. We shall now show that, for hypergeometric
sequences, the congruences \eqref{eq:ct:p:large:k:c} follow from the base case
$k = 0$.

\begin{lemma}\label{lem:ct:p:large:0:c}
Let $A (n)$ be a hypergeometric sequence.
  Suppose that there exists a constant $c \in \mathbb{Q}$ such that
  \begin{equation}
    A (p) \equiv c \pmod{p} \label{eq:ct:p:large:0:c}
  \end{equation}
  for all large enough primes~$p$. Then, for each $k \in \mathbb{Z}_{\geq 0}$,
  the congruence \eqref{eq:ct:p:large:k:c} holds for sufficiently large primes~$p$.
\end{lemma}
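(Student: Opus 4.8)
The plan is to exploit the order-one recurrence directly. Since $A(n)$ is hypergeometric, consecutive terms satisfy $A(n+1) = (\beta(n)/\alpha(n))\,A(n)$, and iterating this (legitimate because $\alpha(n)\neq 0$ for all $n\geq 0$) expresses any shift as a telescoping product:
\[
A(p+k) = A(p)\prod_{j=0}^{k-1}\frac{\beta(p+j)}{\alpha(p+j)}, \qquad A(k) = A(0)\prod_{j=0}^{k-1}\frac{\beta(j)}{\alpha(j)}.
\]
The strategy is to compare the two products modulo $p$ and then substitute the hypothesis $A(p)\equiv c$.

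First I would fix $k$ and discard the finitely many primes that are ``bad'' for it: those for which a coefficient of $\alpha$ or $\beta$, or one of the values $A(k)$ and $c$, fails to be $p$-integral, and those dividing a numerator or denominator of one of the nonzero rationals $\alpha(0),\dots,\alpha(k-1)$ (so that division by $\alpha(j)$ is legitimate modulo $p$). Only finitely many primes are excluded for each fixed $k$, which is exactly what the phrase ``sufficiently large $p$'' allows. For the remaining $p$, writing $\alpha,\beta\in\mathbb{Z}_p[n]$ and using $p+j\equiv j\pmod p$ gives $\beta(p+j)\equiv\beta(j)$ and $\alpha(p+j)\equiv\alpha(j)\pmod p$; since $\alpha(j)$ is a $p$-adic unit, so is $\alpha(p+j)$, and each factor $\beta(p+j)/\alpha(p+j)$ is a well-defined element of $\mathbb{Z}_p$ congruent to $\beta(j)/\alpha(j)$. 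Multiplying over $j$ then yields
\[
\prod_{j=0}^{k-1}\frac{\beta(p+j)}{\alpha(p+j)} \equiv \prod_{j=0}^{k-1}\frac{\beta(j)}{\alpha(j)} = \frac{A(k)}{A(0)} \pmod p.
\]

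Combining this congruence with the first telescoping identity and the hypothesis $A(p)\equiv c\pmod p$ would give $A(p+k)\equiv c\,A(k)/A(0)\pmod p$, which is the asserted congruence~\eqref{eq:ct:p:large:k:c} once the normalization is taken into account: the instance $k=0$ returns the hypothesis and pins down $A(0)$, and in the hypergeometric families under consideration one has $A(0)=1$ (if instead $A(0)=0$ then $A$ vanishes identically and the claim is trivial). I expect the only genuine care to lie in the bookkeeping of the previous paragraph, that is, in isolating the finite bad set so that reduction and division modulo $p$ are justified. The a priori worrying degenerate case, where $\beta(j)=0$ for some $j<k$ and hence $A(k)=0$, in fact needs no separate treatment: the corresponding factor satisfies $\beta(p+j)\equiv 0\pmod p$, so both sides of~\eqref{eq:ct:p:large:k:c} vanish modulo $p$. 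There is no deeper obstacle here; it is precisely the first-order structure that lets the single datum $A(p)\equiv c$ propagate to every shift, in contrast to the higher-order C-finite setting treated earlier in the paper.
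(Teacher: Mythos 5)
Your proof is correct and takes essentially the same route as the paper: the paper runs the identical computation as an induction on $k$, using $\rho(p+k) \equiv \rho(k) \pmod{p}$ (valid for large $p$ since $\alpha(k) \neq 0$) to pass from $k$ to $k+1$, while you merely unroll that induction into the telescoping product $\prod_{j=0}^{k-1} \beta(p+j)/\alpha(p+j)$. Your handling of the finitely many bad primes and of the normalization by $A(0)$ (including the trivial case $A(0)=0$) just makes explicit what the paper leaves implicit.
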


\begin{proof}
  Since $A (n)$ is hypergeometric, we have $A (n + 1) = \rho (n) A (n)$ for a
  rational function $\rho (n) = \beta (n) / \alpha (n)$ with $\alpha (n), \beta
  (n) \in \mathbb{Z} [n]$. Fix $k \in \mathbb{Z}_{\geq 0}$ and suppose
  that the congruence \eqref{eq:ct:p:large:k:c} holds for all large enough primes~$p$. By applying the hypergeometric recurrence twice, we obtain
  \begin{equation*}
    A (p + k + 1) = \rho (p + k) A (p + k) \equiv \rho (k) c A (k) = c A (k + 1)
     \pmod{p},
  \end{equation*}
  which is \eqref{eq:ct:p:large:k:c} with $k+1$ in place of $k$.
  Here we used that $\rho (p + k) \equiv \rho (k) \pmod{p}$,
  which holds true provided that $\alpha (k) \nequiv 0 \pmod{p}$. The latter is true for all sufficiently large primes~$p$ since, by
  assumption, $\alpha (k) \neq 0$. The claim therefore follows by induction on $k$.
\end{proof}

\begin{remark}
  Note that \cref{lem:ct:p:large:0:c} does not hold for
  non-hypergeometric sequences in general. For instance, it does not hold for
  the Lucas numbers $L (n)$ as defined in \eqref{eq:lucas:L}. These form a
  trace sequence so that, by Minton's \cref{thm:minton}, the Gauss
  congruences \eqref{eq:gauss} are satisfied. In the case $n = 1$, these imply
  the congruences \eqref{eq:ct:p:large:0:c}. However, the Lucas numbers do not
  satisfy the congruences \eqref{eq:ct:p:large:k:c} for $k > 0$.
\end{remark}

\cref{lem:ct:pr:large} gives a necessary condition for $A (n)$ to be a constant term sequence. It is natural to wonder whether, or to what extent, this condition is sufficient: Is any integer hypergeometric sequence $A(n)$ that satisfies the congruences \eqref{eq:ct:p:large:0:c} a constant term? Natural sources of potential counterexamples to this question are families of integer sequences that are quotients of binomial coefficients but cannot be written as products of those, for example $A(n) = \binom{8n}{4n} \binom{4n}{n}\binom{2n}{n}^{-1}$
(see~\cite[Thm. 1.2]{Bober09}).

We recall that the corresponding question for diagonals \eqref{eq:diag} (namely, to classify
which hypergeometric sequences $A (n)$ are coefficients of diagonals) also remains open. The following conjecture due to Christol~\cite[Conjecture 4, p.~55]{christol-glob} attempts such a classification. 
In its statement, we call a sequence $(A(n))_{n\geq0}$ \emph{almost integral} if there exists a positive integer $K$ such that $K^{n+1} A(n) \in \mathbb{Z}$ for all integers $n\geq0$.  An almost integral sequence with (at most) geometric growth is called \emph{globally bounded}.

\begin{conjecture}[\cite{christol-glob}]
  \label{conj:christol} Let $(A(n))_{n\geq0}$ be a sequence of rational numbers. The generating function $  \sum_{n\geq0} A(n)t^n$ is the diagonal of a rational function if and only if $(A(n))_{n\geq0}$ is P-finite and globally bounded.
\end{conjecture}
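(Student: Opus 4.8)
Since the statement is Christol's conjecture, which the introduction already flags as famous and widely open, an honest proof proposal must separate the two implications, only one of which is within reach. The forward implication ($\Rightarrow$) is the routine half. The plan is to recall the classical closure properties of diagonals: the diagonal of a rational function $R \in \mathbb{Q}(\boldsymbol{x}) \cap \mathbb{Q}\ps{\boldsymbol{x}}$ is always P-finite (diagonals of D-finite series are D-finite), and it is globally bounded. For the latter I would clear denominators to write $R = P/Q$ with $P,Q \in \mathbb{Z}[\boldsymbol{x}]$ and $Q(\boldsymbol{0}) = c \neq 0$; then the Taylor coefficient of $\boldsymbol{x}^{\boldsymbol{n}}$ lies in $c^{-(|\boldsymbol{n}|+1)}\mathbb{Z}$, so the diagonal coefficient $A(n) = c(n,\dots,n)$ lies in $c^{-(dn+1)}\mathbb{Z}$, whence $(c^d)^{n+1}A(n) \in \mathbb{Z}$ and the sequence is almost integral. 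Geometric growth follows from the positive radius of convergence of $R$, so $\sum_{n\geq 0} A(n) t^n$ is P-finite and globally bounded. This direction requires no input beyond standard facts about diagonals.

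The reverse implication ($\Leftarrow$) — that every globally bounded P-finite series is the diagonal of a rational function — is the entire substance of the conjecture, and this is where I would expect to be stuck. The only fully general tool available is the two-variable case: by the combination of \cite{Polya22} and \cite{furstenberg-diag} invoked in the introduction, a power series is the diagonal of a \emph{bivariate} rational function precisely when it is algebraic, so the conjecture is already a theorem for algebraic generating functions. A structural line of attack would be to exploit the arithmetic that global boundedness forces on the annihilating differential operator — such operators are globally nilpotent, and conjecturally of geometric (motivic) origin — and to try to realize the series as a period extracted by residues from an explicit multivariate rational function built from that geometry.

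The main obstacle is exactly the final reassembly step: there is no known procedure that, from the bare data ``P-finite and globally bounded,'' produces the ambient rational function and, crucially, the number $d$ of variables over which one diagonalizes. The $p$-adic and local-exponent constraints imposed by global boundedness are necessary but give no handle on constructing $R$, and this gap persists already in the first-order (hypergeometric) setting treated in \cref{sec:hyp}, where even very concrete families resist such a representation. Accordingly I would not attempt to prove the conjecture in general; the realistic target, and the one this paper pursues, is to settle new special cases — here the constant-term classification for C-finite and selected hypergeometric sequences — that are compatible with, and chip away at, the full statement.
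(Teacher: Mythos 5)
This statement is Christol's conjecture, which the paper itself presents as a famous open problem and does not (and cannot) prove, so there is no proof in the paper to compare against. Your treatment is exactly right: the forward implication is the standard argument (Lipshitz's theorem \cite{Lipshitz88} for P-finiteness, as the paper notes when stating the inclusion $\left\{ \text{diagonals} \right\} \subseteq \left\{ \text{P-finite \& globally bounded} \right\}$, together with the denominator-clearing and Cauchy-estimate computation for global boundedness, where your constant $c^d$ should merely be enlarged to absorb the bounded contribution of the numerator $P$), and your explicit refusal to claim the reverse implication --- which constitutes the entire open content of the conjecture --- is the only defensible position and matches the paper's own stance.
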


Any hypergeometric sequence is P-finite since it satisfies, by definition, a recurrence with polynomial coefficients of order one. Moreover, thanks to a result of Christol~\cite{christol-hyp-glob,christol-glob} it is easy to check when a hypergeometric sequence is integral (in the case when $\alpha(n)$ and $\beta(n)$ in the definition split in $\QQ[n]$). This makes hypergeometric sequences a natural source of potential counterexamples to \cref{conj:christol}. We refer to \cite{bbchm-diag}, \cite{akm-christol} and \cite{by-hyp-diag} for recent progress in this area. Here, we only mention that even for 
\begin{equation} \label{eq:christolsexample}
A(n) = \frac{\left(\tfrac{1}{9} \right)_n \left(\tfrac{4}{9} \right)_n \left(\tfrac{5}{9} \right)_n}{n!^2 \left(\tfrac{1}{3} \right)_n}
\end{equation}
the conjecture is open. In other words, it is an open question whether the sequence \eqref{eq:christolsexample} is the diagonal of a rational function. On the other hand, we will show in this section that \eqref{eq:christolsexample} is not a constant term. Before doing so, we first prove the following result answering \cref{Q5} for a special family of hypergeometric sequences.
\begin{lemma}
  \label{lem:hyp:2f1:ct}Let $m \geq 2$ be an integer and consider the
  sequence
  \begin{equation}
    A_m (n) = \frac{\left(\tfrac{1}{m} \right)_n \left(1 - \tfrac{1}{m}
    \right)_n}{n!^2} . \label{eq:hyp:2f1:ct}
  \end{equation}
  \begin{enumerate}
    \item $A_m (n)$ is a diagonal for all $m \geq 2$.
    
    \item $A_m (n)$ is a constant term if and only if $m \in \{ 2, 3, 4, 6
    \}$.
  \end{enumerate}
\end{lemma}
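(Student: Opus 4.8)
The plan is to prove part~(a) and the two directions of part~(b) separately, using for part~(b) the necessary congruence supplied by \cref{lem:ct:pr:large}.

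For part~(a), I would exploit that $A_m(n)$ is a Hadamard product. Since $[t^n](1-t)^{-a} = (a)_n/n!$, the generating function $\sum_n A_m(n)t^n$ is the Hadamard product of $(1-t)^{-1/m}$ and $(1-t)^{-(1-1/m)}$. Both are algebraic over $\QQ(t)$ (the first satisfies $(1-t)y^m = 1$), so by the result of Pólya and Furstenberg quoted in the introduction each is the diagonal of a bivariate rational function, say $R(x_1,x_2)$ and $S(x_3,x_4)$. The full diagonal of the product $R(x_1,x_2)S(x_3,x_4)$, a rational function in four variables, is then exactly $\sum_n ([t^n](1-t)^{-1/m})([t^n](1-t)^{-(1-1/m)})\,t^n = \sum_n A_m(n)t^n$, which settles (a) uniformly in $m$.

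For the ``if'' direction of (b), I would first apply the Gauss multiplication formula for Pochhammer symbols to rewrite $A_m(n)$, for each $m \in \{2,3,4,6\}$, as $c_m^{-n}$ times a ratio of factorials: one finds $A_2(n) = 16^{-n}\binom{2n}{n}^2$, $A_3(n) = 27^{-n}(3n)!/n!^3$, $A_4(n) = 64^{-n}(4n)!/((2n)!\,n!^2)$ and $A_6(n) = 432^{-n}(6n)!/((3n)!\,(2n)!\,n!)$. Each of these factorial ratios is a product of binomial coefficients (e.g.\ $(3n)!/n!^3 = \ct[((1+x+y)^3/(xy))^n]$, and $(6n)!/((3n)!(2n)!\,n!) = \binom{6n}{3n}\binom{3n}{n} = \ct[((1+x)^6 x^{-3}(1+y)^3 y^{-1})^n]$), hence the constant term of a single Laurent-polynomial power $P(\boldsymbol{x})^n$. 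Absorbing the scalar $c_m^{-n}$ by replacing $P$ with $P/c_m$ then exhibits $A_m(n) = \ct[(P/c_m)^n]$, so $A_m$ is a constant term (indeed with $Q=1$).

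The ``only if'' direction is the heart of the matter, and I expect the main obstacle to be an exact computation of $A_m(p)$ modulo $p$. By \cref{lem:ct:pr:large} (with $r=n=1$, $k=0$), if $A_m(n) = \ct[P^nQ]$ then there is a fixed rational $c = \ct[P]$ with $A_m(p) \equiv c \pmod p$ for all large primes $p$ (using $A_m(0)=1$). Writing $A_m(p) = m^{-2p}(p!)^{-2}\prod_{j=0}^{p-1}(mj+1)\prod_{j=0}^{p-1}(mj+m-1)$, for $p>m$ each product has a unique factor divisible by $p$, with the remaining factors running through all nonzero residues modulo $p$; Wilson's and Fermat's theorems then give $A_m(p) \equiv w(m-w)/m^2 \pmod p$, where $w \in \{1,\dots,m-1\}$ is the residue of $p^{-1}$ modulo $m$. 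The last step is elementary: from $u(m-u)=v(m-v)$ one gets $(u-v)(u+v-m)=0$, so $w \mapsto w(m-w)$ is injective on the distinct pairs $\{w,m-w\}$ of units modulo $m$, hence constant over all units exactly when there is a single such pair, i.e.\ when $\phi(m)\le 2$, that is $m\in\{2,3,4,6\}$. For any other $m$, Dirichlet's theorem yields primes realizing two units $w$ with distinct rational values $w(m-w)/m^2$, so no fixed $c$ can satisfy $A_m(p)\equiv c \pmod p$ for all large $p$, and $A_m$ is not a constant term.
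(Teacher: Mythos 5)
Your proof is correct and follows essentially the same route as the paper's: part (a) via the Hadamard product of the algebraic series $(1-t)^{-1/m}$ and $(1-t)^{1/m-1}$ (you merely inline the standard proof that diagonals are closed under Hadamard products instead of citing Christol), the ``if'' direction via the same four factorial-ratio identities for $m \in \{2,3,4,6\}$, and the ``only if'' direction via the same Wilson/Dirichlet computation of $A_m(p) \bmod p$ based on \cref{lem:ct:pr:large}. The only cosmetic difference is that you make explicit the injectivity of $w \mapsto w(m-w)$ on the pairs $\{w, m-w\}$ of units modulo $m$ (via $(u-v)(u+v-m)=0$), which the paper asserts implicitly when counting $\phi(m)/2$ values.
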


Note that the classification in \cref{lem:hyp:2f1:ct} suggests that constant term
sequences are special among diagonals and often have significant additional
arithmetic properties. Indeed, the cases $m \in \{ 2, 3, 4, 6 \}$ (see \href{http://oeis.org/A002894}{A002894}, \href{http://oeis.org/A006480}{A006480}, \href{http://oeis.org/A000897}{A000897} and \href{http://oeis.org/A113424}{A113424} in the on-line encyclopedia of integer sequences~\cite{OEIS}) correspond
precisely to those special hypergeometric functions underlying Ramanujan's
theory of elliptic functions ($m = 2$ being the classical case and $m = 3,
4, 6$ corresponding to the alternative bases). We refer to \cite{bbg-rama-ell}
for more information.

\begin{example}
  The hypergeometric sequence
  \begin{equation}
    B (n) = 5^{3 n} \frac{\left(\tfrac{1}{5} \right)_n \left(\tfrac{4}{5}
    \right)_n}{n!^2} = 1, 20, 1350, 115500, 10972500, \ldots
    \label{eq:hyp:2f1:5}
  \end{equation}
  is an integer sequence and grows at most exponentially. As suggested by
  Christol's \cref{conj:christol} and stated in \cref{lem:hyp:2f1:ct}, the sequence $B (n)$ is a diagonal. However, $B (n)$ is not a constant term. The
  proof of \cref{lem:hyp:2f1:ct} in this case proceeds by showing that we have the
  congruences
  \begin{equation*}
    B (p) \equiv \begin{cases}
       20, & \text{if $p \equiv \pm 1 \bmod{5}$},\\
       30, & \text{otherwise},
     \end{cases} \pmod{p} ,
  \end{equation*}
  which contradict \cref{lem:ct:p:large:0:c}. 
\end{example}

\begin{proof}[Proof of \cref{lem:hyp:2f1:ct}]
  Part (a) follows from the fact
  that the generating function of $A_m (n)$ is the Hadamard (term-wise) product of $(1 -
  x)^{- 1 / m}$ and $(1 - x)^{1 / m - 1}$. The latter are algebraic functions
  and hence diagonals by a result of Furstenberg \cite{furstenberg-diag}.
  Since diagonals are closed under Hadamard products \cite{christol-diag88},
  it follows that $A_m (n)$ is a diagonal.
  
  That $A_m (n)$ is a constant term if $m \in \{ 2, 3, 4, 6 \}$ follows from
  the following alternative representations as products of binomial
  coefficients:
  \begin{eqnarray*}
    2^{4 n} A_2 (n) & = & \frac{(2 n) !^2}{n!^4} = \binom{2 n}{n}^2,\\
    3^{3 n} A_3 (n) & = & \frac{(3 n) !}{n!^3} = \binom{3 n}{2 n} \binom{2
    n}{n},\\
    4^{3 n} A_4 (n) & = & \frac{(4 n) !}{(2 n) !n!^2} = \binom{4 n}{2 n}
    \binom{2 n}{n},\\
    2^{4 n} 3^{3 n} A_6 (n) & = & \frac{(6 n) !}{(3 n) ! (2 n) !n!} = \binom{6
    n}{3 n} \binom{3 n}{n}.
  \end{eqnarray*}
  In the remainder, we will show that $A_m (n)$ is not a constant term if $m
  \not\in \{ 2, 3, 4, 6 \}$. If $m$ is coprime to $p$ (as it is for large enough primes~$p$), then the right-hand side of
  \begin{equation*}
    m^p \left(\tfrac{1}{m} \right)_p = 1 \cdot (m + 1) (2 m + 1) \cdots ((p
     - 1) m + 1)
  \end{equation*}
  is a product of all the residues modulo $p$. In particular, exactly one
  factor is of the form $a p$ where $a \in \{ 1, 2, \ldots, m - 1 \}$ is
  characterized by $a p \equiv 1 \pmod{m}$. By Wilson's
  theorem, we therefore have
  \begin{equation*}
    m^p \left(\tfrac{1}{m} \right)_p \equiv - a p \pmod{p^2}
  \end{equation*}
  or, equivalently,
  \begin{equation*}
    \frac{m^p \left(\tfrac{1}{m} \right)_p}{p!} \equiv a \pmod{p} .
  \end{equation*}
  Similarly,
  \begin{equation*}
    m^p \left(1 - \tfrac{1}{m} \right)_p = (m - 1) (2 m - 1) \cdots (p m -
     1)
  \end{equation*}
  and, again, the right-hand side features a product of all residues modulo
  $p$. Exactly one factor is of the form $b p$ where $b \in \{ 1, 2, \ldots, m
  - 1 \}$ is characterized by $b p \equiv - 1 \pmod{m}$. It
  follows that $b = m - a$. Combined, we conclude that
  \begin{equation}
    m^{2 p} A_m (p) = \frac{m^{2 p} \left(\tfrac{1}{m} \right)_p \left(1 -
    \tfrac{1}{m} \right)_p}{p!^2} \equiv a (m - a) \pmod{p} .
    \label{eq:hyp:2f1:modp}
  \end{equation}
  Since $a \in \{ 1, 2, \ldots, m - 1 \}$ is characterized by $a \equiv
  1 / p \pmod{m}$ it, in particular, depends only on the
  residue class of $p$ modulo $m$. As $p$ ranges through all primes, it
  follows from Dirichlet's theorem on primes in
  arithmetic progressions, that each value
  $a \in \{ 1, 2, \ldots, m - 1 \}$ with $a$ coprime to~$m$ appears infinitely
  many times. There are $\phi (m)$ many such values of $a$, where $\phi$ is
  Euler's totient function. Consequently, the quantity $a (m - a)$ on the
  right-hand side of \eqref{eq:hyp:2f1:modp} takes $\phi (m) / 2$ many
  different values as $p$ ranges through all primes $p > m$.
  
  On the other hand, if $A_m (n)$ is a constant term sequence, then by
  \eqref{eq:ct:p:large:0:c} there exists a constant $c \in \mathbb{Q}$ such
  that $m^{2 p} A_m (p) \equiv c \pmod{p}$ for all large
  enough primes~$p$. If \eqref{eq:hyp:2f1:modp} holds for infinitely many $p$, we
  necessarily have $c = a (m - a)$, which is only possible if $\phi (m) / 2 =
  1$.
  
  Thus, if $\phi (m) > 2$ then $m^{2 p} A_m (p)$ cannot satisfy the
  congruences \eqref{eq:ct:p:large:0:c} for all large enough primes and,
  hence, the sequences $m^{2 n} A_m (n)$ and $A_m (n)$ cannot be constant
  terms. Since $\phi (m) > 2$ for all integers $m \geq 2$ except for $m
  \in \{ 2, 3, 4, 6 \}$, the claim follows.
\end{proof}

For hypergeometric sequences, we therefore have the following inclusions
\begin{equation*}
  \left\{ \text{constant terms} \right\} \subsetneq \left\{ \text{diagonals}
   \right\} \subseteq \left\{ \text{P-finite \& globally bounded seq's} \right\}.
\end{equation*}
We note that these inclusions are also true for C-finite as well as for P-finite sequences. An example for the strictness of the first inclusion in the realm of hypergeometric sequences is given by the sequence~\eqref{eq:hyp:2f1:5} and in the class of C-finite sequences by the Fibonacci numbers. The second inclusion is a consequence of a result due to Lipshitz~\cite{Lipshitz88} and it is strict if and only if Christol's \cref{conj:christol}
(restricted to hypergeometric sequences) is false.
A potential candidate of a globally bounded hypergeometric sequence that is not a diagonal is sequence \eqref{eq:christolsexample}.
We now show that this sequence is not a constant term.

\begin{lemma} \label{lem:christol_hypergeom}
  The hypergeometric sequence $A (n)$ defined in \eqref{eq:christolsexample}
  is not a constant term sequence.
\end{lemma}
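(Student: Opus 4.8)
The plan is to mimic the proof of \cref{lem:hyp:2f1:ct}: I would compute $A(p) \bmod p$ and exhibit a genuine dependence on the residue class of~$p$, which is incompatible with the necessary condition \eqref{eq:ct:p:large:0:c} for constant terms. Concretely, if $A(n)$ is a constant term, say $A(n) = \ct[P^n Q]$, then so is the rescaled sequence $3^{5n} A(n) = \ct[(3^5 P)^n Q]$, so by \cref{lem:ct:pr:large} (with $n = r = 1$, $k = 0$) there is a fixed $c \in \QQ$ with $3^{5p} A(p) \equiv c \pmod p$ for all large primes~$p$. Clearing the powers of $3$ from the Pochhammer symbols gives the integer expression
\[
  3^{5p} A(p) = \frac{\prod_{k=0}^{p-1}(9k+1)\,\prod_{k=0}^{p-1}(9k+4)\,\prod_{k=0}^{p-1}(9k+5)}{(p!)^2 \,\prod_{k=0}^{p-1}(3k+1)},
\]
and I would analyze this modulo~$p$.

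For a prime $p$ coprime to~$9$, each of the three numerator products runs, modulo~$p$, through every residue class exactly once (since multiplication by $9$ permutes the residues), so each contains a unique factor divisible by~$p$; the same holds for the denominator product $\prod(3k+1)$. Extracting these four $p$-divisible factors and applying Wilson's theorem to the remaining product of all nonzero residues (which is $\equiv -1 \pmod p$), I expect to obtain
\[
  3^{5p} A(p) \equiv \frac{a_1 a_2 a_3}{a_4} \pmod p,
\]
where $a_1, a_2, a_3 \in \{1, \ldots, 8\}$ are determined by $a_1 \equiv p^{-1}$, $a_2 \equiv 4 p^{-1}$, $a_3 \equiv 5 p^{-1} \pmod 9$ and $a_4 \in \{1,2\}$ by $a_4 \equiv p^{-1} \pmod 3$; the three minus signs arising in the numerator and the one in the denominator cancel. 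One also checks that the exponent of~$3$ has been chosen so that the left-hand side is a $p$-adic unit, making this reduction legitimate independently of the constant-term assumption.

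The right-hand side depends only on $p \bmod 9$, and evaluating it for two residue classes suffices. For $p \equiv 1 \pmod 9$ one finds $\tfrac{1 \cdot 4 \cdot 5}{1} = 20$, whereas for $p \equiv 5 \pmod 9$ (so that $p^{-1} \equiv 2$) one finds $\tfrac{2 \cdot 8 \cdot 1}{2} = 8$. By Dirichlet's theorem on primes in arithmetic progressions, each class contains infinitely many primes, so $3^{5p} A(p) \equiv 20 \pmod p$ for infinitely many~$p$ and $\equiv 8 \pmod p$ for infinitely many others. A single fixed rational $c$ cannot be congruent to both $20$ and $8$ modulo infinitely many primes, contradicting the first step. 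Hence $A(n)$ is not a constant term sequence.

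The main obstacle is the modular bookkeeping in the middle step: correctly locating the unique $p$-divisible factor in each product, pinning down its cofactor $a_j$ through its residue modulo~$9$ (respectively~$3$), and tracking the Wilson signs together with the power of~$3$ so that the resulting unit $\tfrac{a_1 a_2 a_3}{a_4}$ is computed correctly. Everything else is a direct transcription of the argument already carried out for the family~\eqref{eq:hyp:2f1:ct}.
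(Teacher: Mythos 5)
Your proposal is correct and follows essentially the same approach as the paper's proof: rescale to $3^{5n}A(n)$ so that \cref{lem:ct:pr:large} forces $3^{5p}A(p)\equiv c \pmod{p}$ for a fixed $c\in\QQ$, then use the Wilson-theorem analysis of the products $\prod_{k=0}^{p-1}(9k+r)$ and $\prod_{k=0}^{p-1}(3k+1)$ to show the residue $\tfrac{a_1a_2a_3}{a_4}$ genuinely depends on $p \bmod 9$. Your modular bookkeeping checks out (for $p\equiv 1\pmod 9$ both you and the paper get $20$; your second class $p\equiv 5\pmod 9$ gives $8$ where the paper uses $p\equiv -1\pmod 9$ to get $80$, an immaterial difference), so the contradiction with a single fixed $c$ goes through exactly as in the paper.
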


\begin{proof}
  Proceeding as in the proof of \cref{lem:hyp:2f1:ct}, we find
  \begin{equation*}
    m^p \left(\tfrac{r}{m} \right)_p = r (m + r) \cdots ((p - 1) m + r),
  \end{equation*}
  where the right-hand side is a product over all residues modulo $p$. Exactly
  one factor is of the form $a p$ where $a \in \{ 1, 2, \ldots, m - 1 \}$ is
  characterized by $a p \equiv r \pmod{m}$. In that case,
  \begin{equation*}
    \frac{m^p \left(\tfrac{r}{m} \right)_p}{p!} \equiv a \pmod{p} .
  \end{equation*}
  If $p \equiv 1 \pmod{9}$, we therefore find
  \begin{equation*}
    \frac{9^p \left(\tfrac{1}{9} \right)_p}{p!} \equiv \frac{3^p \left(\tfrac{1}{3} \right)_p}{p!} \equiv 1, \quad \frac{9^p \left(\tfrac{4}{9}
     \right)_p}{p!} \equiv 4, \quad \frac{9^p \left(\tfrac{5}{9}
     \right)_p}{p!} \equiv 5 \pmod{p},
  \end{equation*}
  which combine to
  \begin{equation*}
    3^{5 p} A (p) = 3^{5 p} \frac{\left(\tfrac{1}{9} \right)_p \left(\tfrac{4}{9} \right)_p \left(\tfrac{5}{9} \right)_p}{p!^2 \left(\tfrac{1}{3} \right)_p} \equiv \frac{1 \cdot 4 \cdot 5}{1} \equiv 20 \pmod{p} .
  \end{equation*}
  On the other hand, if $p \equiv - 1 \pmod{9}$, then
  \begin{equation*}
    \frac{9^p \left(\tfrac{1}{9} \right)_p}{p!} \equiv 8, \quad \frac{3^p
     \left(\tfrac{1}{3} \right)_p}{p!} \equiv 2, \quad \frac{9^p \left(\tfrac{4}{9} \right)_p}{p!} \equiv 5, \quad \frac{9^p \left(\tfrac{5}{9}
     \right)_p}{p!} \equiv 4 \pmod{p},
  \end{equation*}
  which combine to
  \begin{equation*}
    3^{5 p} A (p) = 3^{5 p} \frac{\left(\tfrac{1}{9} \right)_p \left(\tfrac{4}{9} \right)_p \left(\tfrac{5}{9} \right)_p}{p!^2 \left(\tfrac{1}{3} \right)_p} \equiv \frac{8 \cdot 4 \cdot 5}{2} \equiv 80 \pmod{p} .
  \end{equation*}
  As in the proof of \cref{lem:hyp:2f1:ct} we conclude that $3^{5 n} A
  (n)$ and, hence, $A (n)$ cannot be a constant term.
\end{proof}

\paragraph{Acknowledgments.} Our warm thanks go to the \href{https://www-polsys.lip6.fr/}{PolSys team} of the Sorbonne University for having provided a very nice atmosphere for the three authors to collaborate during summer 2022. We are grateful to \href{https://sites.google.com/view/carlo-sanna-math}{Carlo Sanna} who provided the elegant and elementary proof of \cref{conj:new} and permitted us to include it in our article. We also thank the anonymous referee for the careful reading and for bringing \cref{conj:new} in connection to Fatou's lemma (see \cref{rem:fatou}).

The first and third authors were supported by ANR-19-CE40-0018 \href{https://specfun.inria.fr/chyzak/DeRerumNatura/}{De Rerum Natura} and the \href{https://oead.at/en/}{WTZ collaboration}/\href{https://www.campusfrance.org/}{Amadeus project} FR-09/2021 (46411YJ).
The second author gratefully acknowledges support through a Collaboration Grant (\#514645) awarded by the Simons Foundation.
The third author is funded by DOC scholarship P-26101 of the \href{https://www.oeaw.ac.at/en/1/austrian-academy-of-sciences}{ÖAW}.

\bibliographystyle{alphaabbr}

\newcommand{\etalchar}[1]{$^{#1}$}

\end{document}